\documentclass[11pt, a4paper]{article}

\usepackage[utf8]{inputenc}
\usepackage[T1]{fontenc}
\usepackage{lmodern}           % Better font rendering in PDF
\usepackage{microtype}         % Improves text justification and spacing
\usepackage[margin=1in]{geometry} % Standard 1-inch margins

\usepackage{amsmath, amsthm, amssymb, amsfonts, mathtools}
\DeclareMathOperator{\sgn}{sgn}
\usepackage{bbm}               % For indicator functions (e.g., \mathbbm{1})
\usepackage{bm}                % For bold math symbols

\usepackage{graphicx}
\usepackage{booktabs}          % For professional tables
\usepackage{xcolor}            % For colored text

\usepackage{authblk}

\usepackage[round]{natbib}
\usepackage[colorlinks=true, linkcolor=blue!70!black, citecolor=blue!70!black, urlcolor=blue!70!black]{hyperref}

\usepackage{url}            % simple URL typesetting
\usepackage{booktabs}       % professional-quality tables
\usepackage{amsfonts}       % blackboard math symbols
\usepackage{xcolor}         % colors
\usepackage{natbib}
\usepackage{subcaption}
\usepackage{mathtools}
\usepackage{amsthm}
\usepackage{dsfont} % AJout Abdoulaye pour \mathds
\usepackage{cleveref}
\usepackage{amssymb}
\usepackage{multirow}
\usepackage{booktabs} % For \toprule, \midrule, \bottomrule
\newcommand{\indep}{\mathrel{\mbox{$\perp\mkern-10mu\perp$}}}

\theoremstyle{plain}
\newtheorem{theorem}{Theorem}

\newtheorem{proposition}[theorem]{Proposition}

\theoremstyle{definition}
\newtheorem{definition}{Definition}

\theoremstyle{remark}

\newcommand{\samethanks}[1][\value{footnote}]{\footnotemark[#1]}

\title{OT-FairBoost: Optimal Transport-Guided Gradient Boosting \\ for Fairness Regularization on Tabular Data}

\author[1,2]{Veronika Shilova\thanks{Equall contributions.}}
\author[1,3]{Abdoulaye Sakho\samethanks}
\author[1]{ Younes Boumoussou}
\author[2]{Laurent Risser}
\author[2,4,5]{Jean-Michel Loubes}
\author[1]{Emmanuel Malherbe}

\affil[1]{Artefact Research Center, Paris, France}
\affil[2]{Institut de Mathématique de Toulouse, CNRS, Toulouse, France}
\affil[3]{Laboratoire de Probabilités, Statistique et Modélisation Sorbonne Université Université Paris Cité, CNRS, F-75005, Paris}
\affil[4]{Université de Toulouse, ANITI, Toulouse, France}
\affil[5]{INRIA Regalia, Bordeaux, France}

\date{} % Or leave empty \date{} for no date

\begin{document}

\maketitle
\renewcommand{\thefootnote}{\arabic{footnote}}
\setcounter{footnote}{0}

% --- Abstract ---
\begin{abstract}

Although neural-based machine learning models have received a lot of attention recently, tree-based models such as gradient boosting are competitive for tabular data and therefore remain widely used in various applications of AI.
As when using other machine learning predictive models, they can however yield discriminative predictions across demographic groups, due to so-called algorithmic biases.
These undesirable phenomena have motivated the emergence of new regulatory frameworks and various AI fairness strategies. 
While several pre- and post-processing methodologies exist to mitigate such bias on gradient boosting models, only a few in-processing methods have been proposed.
To bridge this gap, we introduce OT-FairBoost, a novel in-processing framework that incorporates a Wasserstein-2 distance penalty directly into the objective function of gradient-boosted trees. This OT-based mitigation strategy has been shown to efficiently optimize group fairness criteria such as Demographic Parity and Equalized Odds on neural-based predictions. To adapt this approach for gradient boosting, we extend the sample-wise gradient estimation of the Wasserstein-2 distance between group predictions to discrete distributions and hessian diagonals.
We then integrate our approach into the LightGBM training procedure and evaluate it across binary classification, regression, and multi-group sensitive attribute settings. Experimental results in each of these settings demonstrate that OT-FairBoost achieves best accuracy-fairness trade-offs against alternatives.

%Driven by new regulatory frameworks, the demand for algorithmic fairness and trustworthy AI has grown substantially. While distribution-matching regularizers based on Optimal Transport (OT) have successfully mitigated bias in differentiable models, these techniques do not transfer directly to tree ensembles, which remain the state-of-the-art for tabular data. To bridge this gap, we introduce OT-FairBoost, a novel in-processing framework that incorporates a discrete Wasserstein-2 distance penalty directly into the objective function of gradient boosted trees to optimize group fairness criteria such as Demographic Parity and Equalized Odds. To do so, based on the discrete formalism of optimal transport, we derived a sample-wise estimation of the gradient and hessian of this Wasserstein-2 distance with respect to a prediction. We integrate our approach into the LightGBM training process and evaluate it across binary classification, regression, and multi-group sensitive attribute settings. Experimental results demonstrate that OT-FairBoost achieves competitive accuracy-fairness trade-offs.
\end{abstract}

\section{Introduction}

% Fairness in AI, EU AI act
Recent years have seen a surge in demand for trustworthy AI in order 
to avoid exposing companies to scandals related to the use of their AI systems, and more generally to favor a socially non-discriminatory use of AI. A notable answer to this demand is the European Union AI Act \cite{aiact},  a regulatory frameworks which imposes to formally assess the risk level of high-risk AI systems sold from or in the EU. Algorithmic fairness is one of the key requirement of this legislation. Alongside these regulatory objectives, interest in algorithmic fairness has also grown substantially, and a large body of research has been produced in recent years \cite{mehrabi2021survey}.

%SOTA for tabular: tree-based, gradient boosting + Parler de stabular foundation models
For machine learning on tabular data, which underpins a vast majority of real-world administrative, financial, and industrial applications, Gradient-Boosted Decision Trees (GBDTs) remain a popular standard, with strong predictive performance under various datasets  \cite{sun2009classification,RF-medical-example,nguyen2021comparison,sakho2025harnessing}. Despite the recent emergence and rapid development of deep learning alternatives and large-scale tabular foundation models \cite{qu2025tabicl, hollmann2025accurate, qu2026tabiclv2}, GBDTs remain highly competitive using limited hardware resources. \cite{grinsztajn2022tree,shwartz2022tabular}. Consequently, ensuring that tree-based architectures can natively respect fairness constraints is a high-priority challenge for deployable AI.

% Fairness categories: inprocessing, postprocessing
Existing literature generally splits algorithmic bias mitigation strategies into three core paradigms: pre-processing, post-processing, and in-processing. \textit{Pre-processing methods} repair the training data itself, altering its empirical distribution at a possible cost in predictive performance. \textit{Post-processing techniques} are model-agnostic and generally computationally inexpensive. They alter outputs of a frozen model and are constrained by the representations learned during an unconstrained training phase. Notably, the in-depth study of \cite{Besse03042022} on the UCI Census dataset \cite{adultCensus96} showed that pre- and post-processing approaches can generate novel and undesirable algorithmic biases on sub-groups of the originally compared populations. This phenomenon can be limited by using \textit{in-processing methods}, which inject fairness regularizers directly into the model's optimization objective. Distribution-matching regularizers, particularly those based on Optimal Transport and Wasserstein metrics, have emerged as highly effective in-processing mechanisms for neural networks and assimilated models \cite{jiang2020wasserstein,risser2022tackling}. They however do not seamlessly transfer to tree ensembles, which is highlighted by the particularly limited amount of references on this specific subject (see Section \emph{Related Work}). %\ref{sec:related_work}). 
The discrete nature of decision tree splits, combined with the fact that samples falling into the same leaf share identical prediction values, indeed introduces severe non-differentiability that cripples standard gradient-based optimization of distribution distances.

% We propose 
%\as{We propose: Wasserstein in loss, a powerful objective to ensure fairness}
To bridge this gap, we propose OT-FairBoost, a novel in-processing framework that embeds a discrete Wasserstein-2 distance penalty directly into the objective of gradient-boosted trees. By explicitly minimizing the distance between group-conditional prediction distributions during the tree-growing phase, our approach enables native optimization of strict group fairness definitions, including Demographic Parity (DP) and Equalized Odds (EOdds), and applies to classification, regression and multi-groups  settings. 
%To overcome the non-differentiability caused by duplicated leaf-node predictions \textcolor{red}{(please explain what this is and why is that a problem)},
%\textcolor{red}{Motivate the following}
%We derive the exact samplewise directional derivatives (both gradients and Hessians) of the discrete Wasserstein distance. %Furthermore, we develop an accelerated numerical estimation using continuous cumulative distribution functions (CDFs) to ensure our approach scales efficiently to large-scale tabular datasets.
Our main contributions are:

%Contributions
%\paragraph{Contributions.} Our main contributions are:
\begin{itemize}
    \item We derive exact samplewise partial derivatives (gradients and hessians) both for the continuous and discrete Wasserstein-2 distance.
    \item We introduce OT-FairBoost, a highly scalable in-processing bias mitigation framework integrated into LightGBM that leverages cumulative distribution function (CDF) approximations for fast, samplewise derivative computations.
    \item  We demonstrate the bias mitigation abilities of OT-FairBoost in an extensive empirical validation across multiple benchmark datasets.
\end{itemize}

\section{Related work}
\label{sec:related_work}
%\paragraph{Fairness Definitions.}
Understanding how to measure fairness in machine learning systems is a crucial step to quantify and mitigate bias. Fairness is a multifaceted concept that has more than 20 corresponding metrics \cite{vzliobaite2017measuring, verma2018fairness}. It worth mentioning that these metrics are not always compatible with each other \cite{chouldechova2017fair}, which means that improving a metric score may degrade another one \cite{corbett2018measure}. In our work, we focus on \emph{group fairness} notions, which compare the behavior of the model across different populations defined by a categorical sensitive attribute. The two most popular notions are \emph{Demographic Parity} \cite{dwork2012fairness} and \emph{Equalized Odds} \cite{hardt2016equality}. Demographic Parity requires the model predictions to be independent of the sensitive attribute, whereas Equalized Odds requires their conditional independence with respect to the prediction model target.
Beyond fairness measurement, literature also focus on mitigation methods to reduce these biases in machine learning models. These strategies are commonly organized into three families according to the stage of the learning pipeline at which they intervene \cite{mehrabi2021survey}: pre-processing, post-processing and in-processing, as detailed below.

%% distinction
%\paragraph{Pre- and Post-Processing Bias Mitigation Methods.} 
%Mitigation strategies are commonly organized into three families according to the stage of the learning pipeline at which they intervene \cite{mehrabi2021survey}.

%% Pre processing
\emph{Pre-processing} methods transform the training data before learning, so that any model trained on the repaired data is (approximately) fair. Classical approaches include relabeling \cite{kamiran2009classifying}, reweighting \cite{kamiran2012data},  disparate attenuation impact by feature repair \cite{feldman2015certifying}, and  learning fair latent representations, either through discrete encodings and optimization-based transformations \cite{zemel2013learning,calmon2017optimized} or through deep generative models \cite{louizos2015variational,xu2018fairgan}. Closest to our setting, \cite{gordaliza2019obtaining} repair the input distribution by transporting the group-conditional distributions towards their Wasserstein barycenter. 
%, with guarantees on the resulting disparate impact. 
However, in general, these methods do not guarantee that an arbitrary model trained on the repaired data satisfies the target fairness criterion \cite{agarwal2022power}. %furthermore, decorrelating the sensitive attribute and target is often insufficient, since its information may remain encoded in correlated proxy features \cite{ruf2020active}.

\emph{Post-processing} methods modify the outputs of an already trained model, e.g. via group-dependent thresholds derived from ROC analysis \cite{hardt2016equality}, reject-option classification in low-confidence regions \cite{kamiran2012decision}, or optimized score transformations \cite{wei2020optimized}. More recently, promising methods map the group-conditional score distributions to their Wasserstein barycenter \cite{gouic2020projection, chzhen2020fair, gaucher2023fair}, and can also be extended to continuous sensitive attribute \cite{shilova2025fairness}. Post-processing is model-agnostic and cheap, but it acts on a frozen model and is in general suboptimal compared to methods that adapt the model itself during training.

%% Inprocessing
%\paragraph{In-Processing Bias Mitigation Methods.}
\emph{In-processing} methods incorporate the fairness requirement directly into the training objective, either as a constraint or as a regularizer, and most methods give explicit control over the accuracy--fairness trade-off. Examples include the covariance-based constraints \cite{zafar2017fairness}, empirical risk minimization under fairness constraints \cite{donini2018empirical}, the reductions approach of \cite{agarwal2018reductions}, which casts constrained fair classification as a sequence of cost-sensitive problems, and general frameworks for optimization under non-differentiable rate constraints \cite{cotter2019optimization}. Another line of work penalizes statistical dependence between the scores and the sensitive attribute, through adversarial formulations \cite{zhang2018mitigating}.
%or dependence measures such as the Hirschfeld--Gebelein--R\'enyi coefficient, which also covers continuous sensitive attributes \cite{mary2019fairness, grari2019fair}.
Distribution-matching regularizers have been proposed for differentiable models: \cite{jiang2020wasserstein} promote strong demographic parity by penalizing the Wasserstein distance between group-conditional score distributions during the training of a logistic regression, and \cite{risser2022tackling} regularize neural-network classifiers with a Wasserstein-2 penalty optimized by stochastic gradient descent. However, these approaches do not transfer directly to tree ensembles. 

% add disadvantages
For gradient boosted trees specifically, which remain a competitive and widely deployed standard for tabular data, several in-processing approaches have been proposed. FairXGBoost \cite{ravichandran2020fairxgboost} adds to the XGBoost objective a penalty to the majority group; thus, it requires this group to be identified a-priori, applies only to DP in the binary classification setting, and does not enforce equality between groups.
\cite{grari2019fair} train an adversary network against the boosted ensemble to decorrelate its scores from the sensitive attribute, so that fairness holds only to the extent that the trained adversary can detect residual dependence, without explicit control of the distributional gap.
FairGBM \cite{cruz2022fairgbm} enforces rate constraints through a proxy-Lagrangian scheme in which the non-differentiable group rates are replaced by smooth cross-entropy surrogates, so that the enforced quantity is a relaxation of the target rates rather than the fairness criterion itself. Notably, none of these approaches directly minimizes a distributional distance between group-conditional prediction distributions, and their formulations are geared towards binary classification.

% \citep{jiang2020wasserstein} introduce a procedure to promotes group fairness by constraining the distance between the distributions of model scores (or predictions) across sensitive groups. By expressing fairness as a Wasserstein-distance regularizer or constraint, their approach enables an explicit trade-off between accuracy and fairness during the training of a logistic regression.

\section{Model}
\label{model}

%\subsection{Problem and Notations}
%%Notations
\paragraph{Notations.}
Assume we are given a training dataset $\mathcal{D} = \{(x_i, y_i, s_i)\}_{i=1}^n$ consisting of $n$ independent and identically distributed (i.i.d.) samples drawn from a joint distribution, formally $(x_i, y_i, s_i) \sim (X,Y,S)$ where $X$, $Y$ and $S$ are random variables. Here, the support of $X$ is $\mathbb{R}^d$, the feature space, so that $x_i$ is the features vector of $i$-th sample. The support of $S$ is $\{0, 1\}$, $s_i$ representing the binary sensitive demographic attribute of $i$-th sample, that can be easily extended to multi-groups (as in our experiments).
Last, $Y$ represents the target variable, whose support depends on the task context. For binary classification problems, this support is $\{0,1\}$, for regression problems, it is $\mathbb{R}$. 

%Indeed, for classification, we aim to output probabilities by applying the sigmoid function to the raw margins (logits). To compute gradients with respect to these raw margins as required by gradient boosting, we apply the chain rule. 
%Defining $\hat{p}_i = \sigma(\hat{z}_i) = \frac{1}{1 + \exp(-\hat{z}_i)}$, the gradient of the loss function $\ell$ with respect to the raw margin is given by: $\frac{\partial \ell(y_i, \hat{z}_i^{(t)})}{\partial {\hat{z}_i^{(t)}}} \left(\hat{z}_i^{(t-1)}\right) = \frac{\partial \ell(y_i, \hat{p}_i^{(t)})}{\partial {\hat{p}_i^{(t)}}} \left(\hat{p}_i^{(t-1)}\right) \times \frac{\partial \hat{p}_i^{(t)}}{\partial {\hat{z}_i^{(t)}}} \left(\hat{z}_i^{(t-1)}\right)$ \textcolor{red}{TBC}

\begin{definition}\label{def:dp_clf}
    A binary classifier $\hat{Y}$ satisfies Demographic Parity (DP) if $\hat{Y} \indep S$, i.e. $\mathbb{P}(\hat{Y} = 1 | S = 0) = \mathbb{P}(\hat{Y} = 1 | S = 1)$.
\end{definition}

\begin{definition}\label{def:eodds_clf}
    A binary classifier $\hat{Y}$ satisfies Equalized Odds (EOdds) if $\hat{Y} \indep S \mid Y$, i.e. $\mathbb{P}(\hat{Y} = 1 | S = 0, Y = y) = \mathbb{P}(\hat{Y} = 1 | S = 1, Y = y), \forall y \in \{0, 1\}$.
\end{definition}

\begin{definition}\label{def:dp_reg}
    A regressor $\hat{Y}$ satisfies Demographic Parity (DP) if $\hat{Y} \indep S$, i.e. $\sup_{\tau \in \mathbb{R}}|\mathbb{P}(\hat{Y} \leq \tau | S = 0) - \mathbb{P}(\hat{Y} \leq \tau | S = 1)| = 0$.
\end{definition}

%\begin{definition}\label{def:eodds_reg}
%    A regressor $\hat{Y}$ satisfies Equalized Odds (EOdds) if $\hat{Y} \indep S \mid Y$, i.e. $\sup_{\tau \in \mathbb{R}}|\mathbb{P}(\hat{Y} \leq \tau | S = 0, Y \in dy) - \mathbb{P}(\hat{Y} \leq \tau | S = 1, Y \in dy)| = 0, \forall y \in \mathbb{R}$, where $dy$ is an infinitesimal interval around y.
%\end{definition}

\subsection{Boosting procedure and fair loss}

We build upon the functional gradient boosting paradigm exemplified by frameworks like XGBoost \citep[][]{chen2016xgboost} and LightGBM \citep[][]{ke2017lightgbm}. The model learns to estimate $\hat y$ from sum of weak learners $F_t=\sum_{k=1}^t f_k(x)$.
In the case of classification, this continuous output is converted to a probability using a sigmoid function $\sigma(F_t(x))=\frac{1}{1+e^{-F_t(x)}}$, and represents the estimated $P(\hat y=1)$.
%
%\begin{align*}
%\hat y  = \hat z & \text{ if } y \in \mathbb{R} \\
%\mathbb{P}(\hat y=1)  = \sigma(\hat z) & \text{ if } y \in \{0,1\} 
%\end{align*}
%where $\sigma(z)=\frac{1}{1+e^{-z}}$ is the sigmoid function and approximates the probability of $\hat y=1$ for binary classification.

%$\hat z=F_t$ for regression, and $\hat z=\sigma\left(F_t\right)$ with $\sigma$ the sigmoid function for classification. 
To iteratively learn $F_t$, at each boosting iteration $t$, a new decision tree $f_t$ is added to the model ensemble in order to improve the predictions on the dataset $\mathcal{D}$, as quantified by a samplewise loss $\ell\big(y_i, F_t(x_i)\big)$ (e.g., binary cross-entropy or squared error). %For each sample $x_i$, the cumulated prediction becomes $F_t(x_i) = F_{t-1}(x_i) + f_t(x_i)$, where the tree $f_t$ aims at minimizing a second-order Taylor expansion of the objective function around the previous iteration's predictions $F_{t-1}(x_i)$:
The tree $f_t$ aims at minimizing a second-order Taylor expansion of the objective function around the previous iteration's predictions $F_{t-1}(x_i)$:
\begin{align}
\mathcal{L}^{(t)}  &= \sum_{i=1}^n \ell\!\left(y_i, F_{t-1}(x_i) + f_t(x_i)\right) + \Omega(f_t) \\
&\approx \sum_{i=1}^n \left[ \ell\!\left(y_i, F_{t-1}(x_i)\right) + g_i f_t(x_i) + \frac{1}{2} h_i f_t^2(x_i) \right] + \Omega(f_t) \nonumber \\
& = \tilde{\mathcal{L}}^{(t)},
\end{align}
where $\ell(\cdot, \cdot)$ is twice differentiable, with first-order gradients $g_i$ and second-order Hessians $h_i$ defined samplewise as:
\begin{align}
g_i &= \frac{\partial \ell\left(y_i, {F}_{t}(x_i)\right)}{\partial {{F}_{t}(x_i)}} \left({F}_{t-1}(x_i)\right), \\ 
h_i &= \frac{\partial^2 \ell\left(y_i, {F}_{t}(x_i)\right)}{\partial {\left({F}_{t}(x_i)\right)^2}} \left({F}_{t-1}(x_i)\right).
\end{align}
%\begin{equation}
%g_i = \frac{\partial \ell(y_i, \hat{z}_i^{(t)})}{\partial {\hat{z}_i^{(t)}}} \left(\hat{z}_i^{(t-1)}\right),  \; \; h_i = \frac{\partial^2 \ell(y_i, \hat{z}_i^{(t)})}{\partial {\left(\hat{z}_i^{(t)}\right)^2}} \left(\hat{z}_i^{(t-1)}\right).
%\end{equation}
% \begin{equation}
% g_i = \frac{\partial \ell(y_i, \hat{y}_i^{(t-1)})}{\partial \hat{y}_i^{(t-1)}} \quad \text{and} \quad h_i = \frac{\partial^2 \ell(y_i, \hat{y}_i^{(t-1)})}{\partial {\left(\hat{y}_i^{(t-1)}\right)^2}}.
% \end{equation}
%
and the term $\Omega(f_t) = \rho T + \frac{1}{2}\eta \sum_{j=1}^T w_j^2$ regularizes the structural complexity of the tree $f_t$.

The practical algorithm to minimize this $\tilde{\mathcal{L}}^{(t)}$ and the corresponding leaf split criterion is detailed in \cite{chen2016xgboost}. Besides, please note that in the classification setting, $\ell$ is implicitly composed with $\sigma$ to obtain probabilities in $[0,1]$ from $F_t(x_i)$. In this case, the derivative includes a factor $\sigma'(F_t(x))$ coming from the chain rule.
%

%The term $\Omega(f_t) = \gamma T + \frac{1}{2}\eta \sum_{j=1}^T w_j^2$ regularizes the structural complexity of tree $f_t$ by penalizing its total number of leaves $T$ and the $L_2$ norm of its leaf weights $w \in \mathbb{R}^T$. By setting the derivative of the localized objective with respect to the weights to zero, the optimal weight $w_j^*$ for a specific leaf node $j$ governing the sample index set $I_j$ is analytically given by:
%\begin{equation}
%w_j^* = -\frac{\sum_{i \in I_j} g_i}{\sum_{i \in I_j} h_i + \eta}.
%\end{equation}

To enforce algorithmic fairness during this iterative optimization, we must formalize the statistical dependence between the model's predictions and the sensitive attribute. 
% Standard group fairness criteria are fundamentally characterized by disparities between group-conditional prediction distributions. For instance, Demographic Parity (DP) mandates that the marginal distribution of predictions be independent of the sensitive attribute ($\hat{Y} \perp S$), whereas Equalized Odds (EOdds) requires conditional independence given the ground truth label ($\hat{Y} \perp S \mid Y$).
To operationalize \Cref{def:dp_clf} and \Cref{def:dp_reg} over continuous prediction scales, let us focus on the Demographic Parity. To ease notations, we will write $\hat{z} \in \mathbb{R}$ the continuous prediction of the model, in the case of classification and regression: 
\begin{align}
\hat z^{(t)} = 
\begin{cases} 
   F_t(x) & \text{if } y \in \mathbb{R} \\
  \sigma\left(F_t(x)\right)   & \text{if } y \in \{0,1\} 
\end{cases}
\end{align}
We group $\hat z_i^{(t)}$ values by sensitive attribute $s_i$. We define the empirical set of prediction scores at iteration $t$ for group $s$ as
$$\mathcal{Z}^s = \{\hat{z}_i^{(t)} \in \mathbb{R} \mid s_i = s\}=\{{\hat{z}_j}^{s^{(t)}} \in \mathbb{R} \mid j = 1, \dots, n_s\}$$
where $n^s = |\{i : s_i = s\}|$ denote the size of sensitive group $s \in \{0, 1\}$, and each group is re-indexed. In the following, we omit to write $^{(t)}$ for clarity. The empirical probability measure $\mu^{\mathcal{Z}^s}$ for the demographic cohort $\mathcal{Z}^s$ is expressed as:
\begin{equation}
\mu^{\mathcal{Z}^s} = \frac{1}{n_s} \sum_{j=1}^{n^s} \delta_{\hat{z}^s_j},
\end{equation}
where $\delta_z$ denotes the Dirac delta measure centered at $z$. 

We propose an in-processing regularization framework that directly incorporates a statistical distance metric $\mathcal{M}$ between these group-conditional empirical measures $\mu^{\mathcal{Z}^0}$ and $\mu^{\mathcal{Z}^1}$ into the boosting objective at iteration $t$:
\begin{equation}
\mathcal{L}^{(t)}_{\text{fair}} = \mathcal{L}^{(t)} + \lambda \mathcal{M}\!\left(\mu^{\mathcal{Z}^0}, \mu^{\mathcal{Z}^1}\right),
\end{equation}
where $\lambda \in \mathbb{R}^+$ is a user-defined hyperparameter controlling the accuracy-fairness trade-off frontier. The functional structure of $\mathcal{M}$ can be adapted to accommodate diverse fairness definitions (e.g., conditioning on ground truth $y$ to satisfy EOdds). In this work, we consider the discrete Wasserstein distance, to follow the nature of our distributions $\mu^{\mathcal{Z}^s}$. 

\subsection{Discrete Wasserstein distance}

In the case of demographic parity,
we consider the Wasserstein-2 distance between the two distributions of predicted values
\begin{equation}
    \label{eq:w_2_discrete}
\mathcal{W}_2^2\big(\mu^{\mathcal{Z}^0}, \mu^{\mathcal{Z}^1}\big) = \min_\gamma \sum_{i=1}^{n^0} \sum_{j=1}^{n^1} \gamma_{i,j} ( \hat z^0_i - \hat z^1_j )^2
\end{equation}
where $\gamma \in \mathbb{R+}^{n^0 \times n^1}$ is the coupling matrix that verifies $\sum_j \gamma_{i,j} = \frac{1}{n^0} \; \forall i$ and $\sum_i \gamma_{i,j} = \frac{1}{n^1} \; \forall j$. Note that when there exists duplicated values in $\mathcal{Z}^0$ or $\mathcal{Z}^1$, while the minimum exists the corresponding solution $\gamma$ is not unique. In the case of distinct values, the solution $\gamma^*$ is unique, and is invariant as long as the ranking among values within each group and respective probability masses are preserved \cite{peyre2019computational}. 
However, in our case of $\hat z_i$ predicted by a tree-based model, all samples falling in the same leaf will have the same $\hat z_i$ values, and even after few iterations and aggregated decision trees, having unique $\hat z_i$ values is unlikely.

To simplify the calculations below, we will define the set of distinct values of $\mathcal{Z}^s$:
$\mathcal{U}^s = \{u_1^s, \dots, u_{{|\mathcal{U}^s|}}^s\}$, where ${|\mathcal{U}^s|} \leq n^s$. We define the mapping to the new index between $\hat z^s_i$ and the corresponding $u^s_j$ value, $d: i \to d(i)=j$ such that $u^s_j = \hat z^s_i$, where we omit to clarify the group $s$ since it will be obvious. We define the probability mass of a value $u$ by
$
p^s(u) = \frac{|\{\hat z\in \mathcal{Z}^s| \hat z=u
\}|}{n^s}
$
The distribution $\mu^s(z)$ can thus be rewritten as
$
\mu^{\mathcal{Z}^s}(z) = \sum_{j=1}^{|\mathcal{U}^s|} p^s(u^s_j) \delta_{u^s_j}(z)
$
and the Wasserstein distance as
\begin{equation}
\label{eq:w2_unique}
\mathcal{W}_2^2\big(\mu^{\mathcal{Z}^0}, \mu^{\mathcal{Z}^1}\big) = \sum_{i=1}^{|\mathcal{U}^0|} \sum_{j=1}^{|\mathcal{U}^1|} \gamma^*_{i,j} ( u^0_i - u^1_j )^2
\end{equation}
where the coupling matrix $\gamma^* \in \mathbb{R+}^{|\mathcal{U}^0| \times |\mathcal{U}^1|}$ is the unique solution of minimization that verifies $\sum_j \gamma^*_{i,j} = p^0(u^0_i) \; \forall i$ and $\sum_i \gamma_{i,j} = p^1(u^1_j) \; \forall j$. In the following, we will only consider this $\gamma^*$ as defined for the sets of unique values $\mathcal{U}^0$ and $\mathcal{U}^1$.
Let $T_{\mathcal{U}^0 \to \mathcal{U}^1}$ be the corresponding transport map from $\mathcal{U}^0$ to $\mathcal{U}^1$:
\begin{equation}\label{eq:mapping}
    T_{\mathcal{U}^0 \to \mathcal{U}^1} (u^0_i) =  \sum_{j=1}^{|\mathcal{U}^1|} \frac{\gamma^*_{i,j}}{p^0(u^0_i)} u^1_j
\end{equation}
%where $ \sum_{j=1}^{|\mathcal{U}^1|}   \frac{\gamma^*_{i,j}}{p^0(u^0_i)}=1$, so that $T_{\mathcal{U}^0 \to \mathcal{U}^1} (u^0_i)$ is
that can be interpreted as the barycenter of  points $u^1_j \in \mathcal{U}^1$ corresponding to $u_i^0$ given by the optimal coupling matrix $\gamma^*$. 

We extend this definition to the original values sets $\mathcal{Z}^0$ and $\mathcal{Z}^1$, that we write $T_{\mathcal{Z}^0 \to \mathcal{Z}^1}$. If $\hat z_i^0$ is unique in $\mathcal{Z}^0$, then the transport map is directly $T_{\mathcal{Z}^0 \to \mathcal{Z}^1}(\hat z^0_i) = T_{\mathcal{U}^0 \to \mathcal{U}^1} (u^0_{d(i)})$. When $\hat z_i^0$ is not unique, we define it as
$$
T_{\mathcal{Z}^0 \to \mathcal{Z}^1}(\hat z^0_i) = \lim_{\substack{\varepsilon \to 0 \\ \varepsilon > 0}} \; T_{\mathcal{Z}^0_\varepsilon \to \mathcal{Z}^1} (\hat z^0_i + \varepsilon)
$$
where $\mathcal{Z}^0_\varepsilon$ is the set of value where $\hat z^0_i$ has been changed to $z^0_i + \varepsilon$, and is thus unique for sufficiently small $\varepsilon$, i.e.  $\varepsilon < \min_{i \neq j} |\hat z^0_i - \hat z^0_j|$. This limit exists, the optimal coupling matrix $\gamma^*$ is indeed constant since the order among unique values of $\mathcal{Z}^0_\varepsilon$ is kept as well as their probability masses. Note that for $\varepsilon < 0$ the limit would differ, since the order of unique values in $\mathcal{Z}^0_\varepsilon$ would be different and thus would be $\gamma^*$.
The transport maps $T_{\mathcal{U}^1 \to \mathcal{U}^0}$ and $T_{\mathcal{Z}^1 \to \mathcal{Z}^0}$ are equivalently defined.

\subsection{Samplewise Derivatives}

\begin{proposition}
\label{proposition:main-derivatives}
We use as local estimate of the gradient and hessian of the Wasserstein-2 term:
\begin{align}
\label{eq:grad}
\frac{\partial \mathcal{W}_2^2\big(\mu^{\mathcal{Z}^0}, \mu^{\mathcal{Z}^1}\big)}{\partial {\hat z^0_i}} \left({\hat{z}_i}^{0(t-1)}\right) &= \frac{2}{n^0} \left(\hat z^0_i - T_{\mathcal{Z}^0 \to \mathcal{Z}^1}(\hat z^0_i)\right) \nonumber\\
\frac{\partial^2 \mathcal{W}_2^2\big(\mu^{\mathcal{Z}^0}, \mu^{\mathcal{Z}^1}\big)}{\partial \left({\hat z^0_i}\right)^2} \left({\hat z_i}^{0(t-1)}\right)&= \frac{2}{n^0}
\end{align}
%\begin{align}
%\label{eq:grad}
%\frac{\partial^+ \mathcal{W}_2^2(\mu^{\mathcal{Y}^0}, \mu^{\mathcal{Y}^1})}{\partial {y^0_i}} (y^0_i) &= \frac{2}{n^0} \big(y^0_i - T_{\mathcal{Y}^0 \to \mathcal{Y}^1}(y^0_i)\big) \nonumber\\
%\frac{\partial^{2} \mathcal{W}_2^2(\mu^{\mathcal{Y}^0}, \mu^{\mathcal{Y}^1})}{\partial {y^0_i} ^2} (y^0_i)&= \frac{2}{n^0}
%\end{align}
where the partial derivatives are implicitly evaluated for all $\mathcal{Z}^s$ values at ${\hat z_i}^{s(t-1)}$. Our estimate $\frac{\partial \mathcal{W}_2^2\big(\mu^{\mathcal{Z}^0}, \mu^{\mathcal{Z}^1}\big)}{\partial {\hat z^1_j}}$ for a sample in the other group is similarly given with the transport map $T_{\mathcal{Z}^1 \to \mathcal{Z}^0}$, equivalently defined.
\end{proposition}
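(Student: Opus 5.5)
The plan is to interpret the two formulas as the one-sided (right) directional derivatives of the map $\hat z^0_i \mapsto \mathcal{W}_2^2\big(\mu^{\mathcal{Z}^0}, \mu^{\mathcal{Z}^1}\big)$, with all other predictions frozen at their values from iteration $t-1$. The computation rests on a local-constancy property of the optimal coupling. In one dimension, $\gamma^*$ is the monotone (quantile) coupling. It depends only on the relative order of the atoms and on their masses, as recalled from \cite{peyre2019computational} before the statement.

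\textbf{Step 1: the perturbed configuration.} Fix $i$ and replace $\hat z^0_i$ by $\hat z^0_i+\varepsilon$, where $0<\varepsilon<\min_{k\neq l}|\hat z^0_k-\hat z^0_l|$ (minimum over distinct values). Two cases arise.
\begin{itemize}
\item If $\hat z^0_i$ is unique in $\mathcal{Z}^0$, its atom simply moves. The ordering of $\mathcal{U}^0$ and all masses are unchanged, so $\gamma^*$ is constant on $[0,\varepsilon)$.
\item If $\hat z^0_i$ is duplicated, the shifted sample splits off as a new atom of mass $1/n^0$, placed just to the right of its former value. This is exactly the configuration $\mathcal{Z}^0_\varepsilon$ used to define $T_{\mathcal{Z}^0\to\mathcal{Z}^1}(\hat z^0_i)$. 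For all sufficiently small $\varepsilon>0$ the ordering of unique values and the masses are again fixed, so the coupling $\gamma^\varepsilon$ on $\mathcal{Z}^0_\varepsilon$ does not depend on $\varepsilon$.
\end{itemize}
In both cases, write $\gamma^\varepsilon_{i,\cdot}$ for the row of the sample $i$. Its entries sum to $1/n^0$, and $n^0\sum_j\gamma^\varepsilon_{i,j}u^1_j=T_{\mathcal{Z}^0\to\mathcal{Z}^1}(\hat z^0_i)$ by \eqref{eq:mapping} and the limit definition.

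\textbf{Step 2: explicit formula and differentiation.} On $(0,\varepsilon_0)$ the cost is
\[
\mathcal{W}_2^2(\varepsilon)=C+\sum_j\gamma^\varepsilon_{i,j}\big(\hat z^0_i+\varepsilon-u^1_j\big)^2 ,
\]
where $C$ collects the terms that do not involve the moved atom. This is a quadratic polynomial in $\varepsilon$. Its first derivative is $2\sum_j\gamma^\varepsilon_{i,j}(\hat z^0_i+\varepsilon-u^1_j)$, which tends to $\frac{2}{n^0}\big(\hat z^0_i-T_{\mathcal{Z}^0\to\mathcal{Z}^1}(\hat z^0_i)\big)$ as $\varepsilon\to0^+$. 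Its second derivative is $2\sum_j\gamma^\varepsilon_{i,j}=2/n^0$.

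It remains to check that $\mathcal{W}_2^2(\varepsilon)\to\mathcal{W}_2^2(0)$, so that the right derivative at $0$ equals this limit. Continuity holds because $\mathcal{W}_2^2$ is continuous in the atom locations. It also follows directly: $\gamma^\varepsilon$, with the split atom merged back, is a feasible coupling at $\varepsilon=0$, and it is optimal there because it is still monotone.

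\textbf{Step 3: the other group.} The gradient and hessian with respect to $\hat z^1_j$ follow by the symmetric argument, using $T_{\mathcal{Z}^1\to\mathcal{Z}^0}$.

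\textbf{Main obstacle.} The delicate point is the duplicated case. There $\mathcal{W}_2^2$ is not differentiable: left and right derivatives differ, since for $\varepsilon<0$ the split atom is coupled to different mass in $\mathcal{U}^1$. The statement must therefore be read as a one-sided derivative, hence the phrase ``local estimate''. I would handle this by arguing carefully from the quantile representation $\mathcal{W}_2^2=\int_0^1\big(F_0^{-1}(q)-F_1^{-1}(q)\big)^2dq$. Moving one atom of mass $1/n^0$ to the right changes $F_0^{-1}$ only on the top subinterval of length $1/n^0$ of its former level set. This gives the row $\gamma^\varepsilon_{i,\cdot}$ directly: it is the restriction of the quantile coupling to that subinterval.
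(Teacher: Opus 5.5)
Your proposal is correct and follows essentially the same route as the paper's proof. For small $\varepsilon>0$ the monotone coupling is frozen, and isolating the row of the moved atom gives $\mathcal{W}_2^2(\varepsilon)=\mathcal{W}_2^2(0)+\frac{1}{n^0}\big(2\varepsilon(\hat z^0_i-T_{\mathcal{Z}^0\to\mathcal{Z}^1}(\hat z^0_i))+\varepsilon^2\big)$; in the duplicated case the paper, like you, merges the split atom's row back into its twin to show the remaining terms equal the unperturbed distance, so your explicit continuity check and quantile remark are only a more careful packaging of that same argument.
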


The proof of \Cref{proposition:main-derivatives} is provided in the Appendix. This proposition establishes our estimate for the derivatives of the Wasserstein distance, relying primarily on its discrete definition. Please note that in this formalism, $\mathcal{W}_2^2(\mu^{\mathcal{Z}^0}, \mu^{\mathcal{Z}^1})$ is not $\mathcal{C}^1$ in points $\hat z_i^0$ with  duplicated values, so that we use a right-hand first-order derivative ($\varepsilon \in \mathbb{R}^+ \to 0$). The left-hand derivative has a similar expression, that tends to the same value for a sufficiently large dataset $\mathcal{D}$ (see Appendix for more details).
Furthermore, we also derived the partial derivatives in the continuous formulation of the samples distribution, leveraging their respective cumulative distribution function and the continuous Wasserstein-2 distance in 1D. To do so, we defined a perturbation around the sample $\hat z_i^0$ as $\alpha_{\varepsilon}^{i} = \Delta (-\mathds{1}_{[\hat z_i^0 - \varepsilon, \hat z_i^0[} +\mathds{1}_{[\hat z_i^0, \hat z_i^0+\varepsilon]})$. The results are aligned with our expressions of Proposition 1, up to the $\Delta$ value, with no difficulty for duplicated sample values thanks to continuous formalism. The details of this derivation are in the Appendix.

\begin{table*}[t]
\centering
\caption{Summary of experimental datasets and corresponding fairness problems.}
\small
\setlength{\tabcolsep}{4pt} 
\label{tab:datasets_summary}
\small
\begin{tabular}{lcccc}
\toprule
Dataset & Samples ($n$) & Sensitive Attribute $S$ & Task Type & Target Variable $Y$ \\ \midrule
Folktables TN & 34003 & Sex (binary)  & Binary Class. & Income $> \$50$K \\
 &  &  \& Ethnicity (Multi-group) &  &  \\
FairJob & $>$ 1,000,000 & Gender (Proxy) & Binary Class. & Ad Interaction \\
Communities \& Crime & 1994 & $\mathds{1}_{\text{Black Pop. Ratio}\ge 0.23}$& Regression & Crime Rate \\ 
\bottomrule
\end{tabular}
\end{table*}
To incorporate the samplewise derivatives into the objective of the gradient boosting framework, we assume that for a given sample $\hat z^s_i$, the other group distribution $\mu^{\mathcal{Z}^{1-s}}$ is fixed, which amounts to say that in a boosting iteration we neglect the variation in the other group. Consequently, $
\frac{\partial^{2} \mathcal{W}_2^2(\mu^{\mathcal{Z}^0}, \mu^{\mathcal{Z}^1})}{\partial {\hat z^0_i} \partial {\hat z^1_k}}=0$ for all $i=1..n^0,k=1..n^1$.
We use as local estimate of the other non-diagonal hessian terms:
$$
\frac{\partial^{2} \mathcal{W}_2^2(\mu^{\mathcal{Z}^0}, \mu^{\mathcal{Z}^1})}{\partial {\hat z^0_i} \partial {\hat z^0_j}}=0 ,  \forall i \neq j
$$
and equivalently for samples in the other group $\mathcal{Z}^1$.
This expression is straightforward in the case of distinct ${\hat z^0_i}$ and ${\hat z^0_j}$ values, following \Cref{eq:w_2_discrete}. When ${\hat z^0_i}={\hat z^0_j}$, then these partial derivative are implicitly one-sided (right and left-handed), since $\mathcal{W}_2^2(\mu^{\mathcal{Z}^0}, \mu^{\mathcal{Z}^1})$ is not $\mathcal{C}^1$ in this point. We detail in the Appendix how we obtain this expression, with numerical illustrations.

%As an assumption, we consider that all non-diagonal terms of the Hessian matrix are null,
%$\frac{\partial^{2} \mathcal{W}_2^2(\mu^{\mathcal{Z}^0}, \mu^{\mathcal{Z}^1})}{\partial {\hat z^0_i} \partial {\hat z^0_j}}=0$ for $i \neq j$. It is true when $\hat z^0_i \neq \hat z^0_j$, since a perturbation in $\hat z^0_j$ will have no impact on the mapping value $T_{\mathcal{Z}^0 \to \mathcal{Z}^1}(\hat z^0_i)$ of sample prediction $\hat z^0_i$ -- equivalently, there is no term crossing $\hat z^0_i$ and $\hat z^0_j$ in Equation \ref{eq:w_2_discrete}. We consider it to be also valid for duplicated sample predictions, since it is becomes less and less frequent during boosting iterations, in order to simplify the loss function to sum of sample-wise terms. Last, for two samples $\hat z^0_i, \hat{z^1_j}$respectively in group $s=0$ and $s=1$, $\frac{\partial^{2} \mathcal{W}_2^2(\mu^{\mathcal{Z}^0}, \mu^{\mathcal{Z}^1})}{\partial {\hat z^0_i} \partial {\hat z^1_j}}=0$ when $\gamma^*_{d(i),d(j)}=0$ (see Equation \ref{eq:w_2_discrete}). As an approximation, we consider it to be true for all pairs of samples in the two groups, since the non null terms $\gamma^*_{d(i),d(j)}$ tends to zero when the dataset size grows. Consequently, in practice, after a Taylor expansion of $\mathcal{W}_2^2(\mu^{\mathcal{Z}^0}, \mu^{\mathcal{Z}^1})$ the objective is to minimize the following expression

In practice, following our estimate of non-diagonal hessian terms, after a Taylor expansion of $\mathcal{W}_2^2(\mu^{\mathcal{Z}^0}, \mu^{\mathcal{Z}^1})$ the objective is to minimize
\begin{align}
\tilde{\mathcal{L}}^{(t)}_{\text{fair}}  = &\tilde{\mathcal{L}}^{(t)} + \mathcal{W}_2^2\big(\mu^{\mathcal{Z}^0}, \mu^{\mathcal{Z}^1}\big)
\nonumber \\
& + \sum_{i=1}^n \left[ \frac{\partial \mathcal{W}_2^2\big(\mu^{\mathcal{Z}^0}, \mu^{\mathcal{Z}^1}\big)}{\partial F_t(x_i)} \big(F_{t-1}\left(x_i\right)\big)
 f_t(x_i)   \right. \left.  + \frac{1}{2} \frac{\partial^2 \mathcal{W}_2^2\big(\mu^{\mathcal{Z}^0}, \mu^{\mathcal{Z}^1}\big)}{\partial ({F_t(x_i))^2}} \big(F_{t-1}\left(x_i\right)\big) f_t^2(x_i) \right]
\end{align}
where the partial derivatives are given by \Cref{eq:grad}, with the group value $s_i$ indicating the transport map direction. Please note that for classification, one need to multiply the gradient and hessian of \Cref{eq:grad} by $\sigma'(F_t(x_i))$.

%In the following, we propose an efficient numerical estimation and evaluate the quality of these estimates.

%\subsubsection{Relation between DP and EOdds}

%\subsubsection{Wasserstein distance with DP and EOdds}

\subsection{Numerical Estimation of the Derivatives}

In practice, we compute the gradient of \Cref{eq:grad} by using the CDFs for $\mu^{\mathcal{Z}^0}$ and $\mu^{\mathcal{Z}^1}$ for the transport map term:
$$
T_{\mathcal{Z}^0 \to \mathcal{Z}^1}(\hat z^0_i) \approx H_1^{-1}\left(H_0(\hat z^0_i)\right)
$$
where $H_s$ denotes the CDF for the demographic cohort $\mathcal{Z}^s$.
This expression is more scalable in the number of samples $|\mathcal{Z}|$, since it does not require to compute the coupling matrix $\gamma$. Note that this expression also holds for the transport map of \Cref{eq:mapping} re-written with $\varepsilon < 0$.

We now evaluate the quality of our derivatives estimates on a simulated dataset $\mathcal{Z}^0$ and $\mathcal{Z}^1$, with both 1000 samples, including duplicates.
For each sample $\hat{z}^0$ in the first group, we perform the following procedure to obtain an empirical ``oracle'' for its gradient and hessian:
\begin{enumerate}
    \item Let $\varepsilon > 0$ be a small perturbation  to be applied on the given sample $\hat{z}^0_c$.
    \item Let the perturbed dataset be $\mathcal{Z}^0_\varepsilon = \{\hat{z}^0_i\}_{i=1, i\neq c}^{n^0} \cup \{\hat{z}^0_c + \varepsilon\}$.
    \item Compute the squared Wasserstein distances for $\mathcal{Z}^0$ and $\mathcal{Z}^0_\varepsilon$, denoted $\mathcal{W}_2^2(\mathcal{Z}^0, \mathcal{Z}^1)$ and $\mathcal{W}_2^2(\mathcal{Z}^{0}_\varepsilon, \mathcal{Z}^1)$, respectively.
    \item Calculate the finite difference quotient: $E(\varepsilon) = \frac{\mathcal{W}_2^2(\mathcal{Z}^0_\varepsilon, \mathcal{Z}^1) - \mathcal{W}_2^2(\mathcal{Z}^0, \mathcal{Z}^1)}{\varepsilon}$, and return an average of $E(\varepsilon)$ over several small values of $\varepsilon$ for robustness.
\end{enumerate}
\begin{figure}[h]
    \centering    \includegraphics[width=0.8\linewidth]{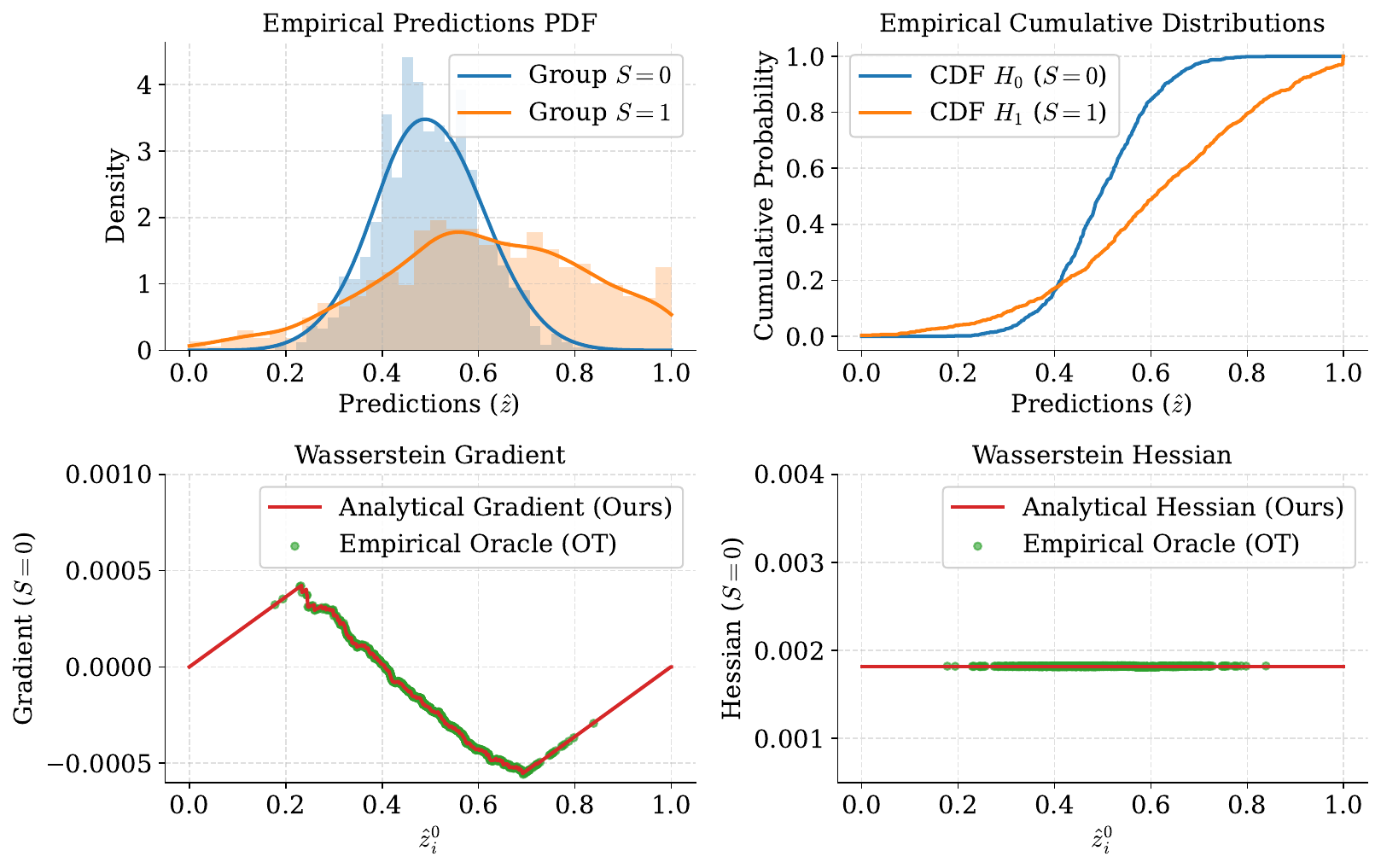}
    \caption {Simulated dataset distributions $\mathcal{Z}^0$ and $\mathcal{Z}^1$, and corresponding Wasserstein gradients and hessians for our estimate and the oracle on $\mathcal{Z}^0$ samples.}
    \label{fig:illustrations_estimates}
\end{figure}

The first row of \Cref{fig:illustrations_estimates} displays the probability and cumulative distribution functions (PDFs and CDFs) of the simulated groups, incorporating duplicated samples. The bottom row shows the Wasserstein gradients and hessians for each group, computed using the analytical formula from \Cref{proposition:main-derivatives}.
As observed in the figures, our analytical gradient and Hessian estimates align very well with the empirical oracle values for every sample, demonstrating the accuracy and effectiveness of our formulation.

We also illustrated in the Appendix that $\mathcal{W}_2^2(\mu^{\mathcal{Z}^0}, \mu^{\mathcal{Z}^1})$ is indeed minimized during OT-FairBoost boosting iterations.

%For the mapping function (Equation \ref{eq:mapping}) we instead consider the $\gamma$ computed directly on discrete populations with balanced allocations for duplicate values. Note that both converge when $N_0 \to \infty$, since the impact of moving $u^0_i$ on $gamma$ values is $\mathcal{O}(1/N_0)$. 

%We use pot package to compute optimal $\gamma$ between distributions.

\section{Experiments}
In this section, we integrate our Wasserstein estimates into the LightGBM training process.
%, defining a loss function that balances predictive performance and fairness via a trade-off parameter.

\paragraph{Datasets}
To evaluate the empirical performance and bias mitigation efficacy of OT-FairBoost, we conduct comprehensive evaluations across three standard benchmark datasets spanning varying scales, feature densities, and sensitive attribute tracking mechanics: Folktables \cite{ding2021retiring}, FairJob \cite{vladimirova2024fairjob} and Communities \& Crime \cite{redmond2002data}. A complete structural summary of these datasets is provided in \Cref{tab:datasets_summary}.

\begin{table*}[h]
\centering
\caption{Performances and fairness metrics comparison across different settings, when maximizing the objective $Obj_{\alpha=0.75}$. Post-processing baselines W1Post and W2Post are displayed in italics since they require $S$ as input for inference.}
\label{tab:fairness_metrics_main}
\setlength{\tabcolsep}{5pt} 
\small
\begin{tabular}{l|c|cccccc}
\toprule
\textbf{Metric} & \texttt{Baseline} & \texttt{FairGS} & \texttt{FairEG} & \texttt{FairGBM} & \texttt{OT-FairBoost} & \textit{\texttt{W1Post}}$^*$ & \textit{\texttt{W2Post}}$^*$ \\
\midrule
\multicolumn{8}{c}{Demographic Parity (DP) on Folktables Tennessee} \\
\midrule
PR AUC & 77.3$\pm$0.9 & 74.2$\pm$0.6 & 60.2$\pm$0.9  & -    & \textbf{75.6$\pm$1.1} & 70.7$\pm$0.8 & 74.3$\pm$1.0 \\
ROC AUC    & 88.5$\pm$0.6 & 86.9$\pm$0.4 & 78.4$\pm$0.4  & -    & \textbf{87.5$\pm$0.5} & 83.8$\pm$0.5 & 86.8$\pm$0.6  \\
$\Delta$DP    & 18.5$\pm$0.6 & 0.9$\pm$0.8 & 0.6$\pm$0.7  & -    & 1.4$\pm$1.2 & 2.4$\pm$1.1 & \textbf{0.0}$\pm$\textbf{0.0}  \\
$Obj_{\alpha=0.75}$ & 78.4$\pm$0.7 & 80.4$\pm$0.5 & 70.0$\pm$0.7  & -    & \textbf{81.4$\pm$0.9} & 77.4$\pm$0.7 & 80.7$\pm$0.7  \\
\midrule
\multicolumn{8}{c}{Equalized Odds (EOdds) Folktables Tennessee} \\
\midrule
PR AUC   & 77.3$\pm$0.9 &   77.1$\pm$1.0   &   60.6$\pm$1.4   & 76.2$\pm$0.7 &     \textbf{77.4$\pm$1.2}  & -          & -          \\
ROC AUC   & 88.5$\pm$0.6 &  \textbf{88.5$\pm$0.6}   &   78.8$\pm$0.8    & 88.1$\pm$0.5 &     \textbf{88.5$\pm$0.6}  & -          & -          \\
$\Delta$EOdds    & 22.7$\pm$2.0 &  22.2$\pm$3.1    &    7.4$\pm$3.6   & 9.2$\pm$2.2 &   \textbf{3.1$\pm$2.7}    & -          & -          \\
$Obj_{\alpha=0.75}$ & 77.3$\pm$0.9 &   77.2$\pm$1.1   &    68.6$\pm$1.4   & 79.8$\pm$0.8  & \textbf{82.3$\pm$1.1}      & -          & -          \\
\midrule
\multicolumn{8}{c}{Regression (DP) on Communities \& Crime} \\
\midrule
MAE      & 9.0$\pm$0.5      & -          & -          & -      & \textbf{13.3$\pm$1.2}      & 17.9$\pm$0.7      & 14.0$\pm$0.8      \\
KS  & 67.0$\pm$4.0      & -          & -          & -      & 9.7$\pm$3.9      & 75.8$\pm$ 4.0     & \textbf{5.9$\pm$2.1}      \\
$\mathcal{W}_2$  & 30.1$\pm$1.1     & -          & -          & -      & \textbf{1.4$\pm$0.5}      & 1.6$\pm$0.1      & \textbf{1.4$\pm$0.5}      \\
$Obj_{\alpha=0.75}$ & 85.6$\pm$0.5      & -          & -          & -      & \textbf{89.7$\pm$0.9}      & 86.2$\pm$0.5      & 89.1$\pm$0.6      \\
\midrule
\multicolumn{8}{c}{Multi-Group for Sensitive Attribute (DP) on Folktables Tennessee} \\
\midrule
PR AUC   & 76.9$\pm$0.4     & 72.9$\pm$0.5   & 62.2$\pm$0.5   & - & \textbf{76.3$\pm$0.4}      & 68.0$\pm$1.8      & 74.6$\pm$0.6      \\
ROC AUC   & 88.4$\pm$0.2    & 83.1$\pm$0.3   & 79.2$\pm$0.4   & - & \textbf{88.1$\pm$0.3}      & 82.7$\pm$1.2      & 87.2$\pm$0.4      \\
$\Delta_{\text{max}} \text{DP}$   & 20.2$\pm$3.2      & 34.5$\pm$0.9   & 5.4$\pm$0.9   & - & \textbf{4.2$\pm$2.2}      & 14.0$\pm$3.9      & 7.8$\pm$2.1      \\
$Obj_{\alpha=0.75}$ & 77.6$\pm$0.8    & 71.1$\pm$0.4   & 70.3$\pm$0.5   & - & \textbf{81.2$\pm$0.6}      & 72.4$\pm$1.7      & 79.0$\pm$0.7      \\
\bottomrule
\end{tabular}
\end{table*}
\paragraph{Evaluation Metrics} We report ROC AUC and PR AUC for classification and MAE for regression. Fairness is quantified by the empirical violation of the definitions in Section Model, measured by the following empirical metrics on the test set: %{sec:model} ref not working
\begin{itemize}
    \item For binary classification and binary sensitive variable
    \begin{equation}
    \label{eq:DP_gap_def}
        \Delta \text{DP} = |\mathbb{P}(\hat{Y} = 1 | S = 0) - \mathbb{P}(\hat{Y} = 1 | S = 1)|
    \end{equation}
    \begin{equation}
        \text{DI} = \frac{\min_{s \in \{0, 1\}}[\mathbb{P}(\hat{Y} = 1 | S = s)]}{\max_{s \in \{0, 1\}}[\mathbb{P}(\hat{Y} = 1 | S = s)]}
    \end{equation}
    \item For binary classification with a binary sensitive attribute, the Equalized Odds gap ($\Delta\text{EOdds}$) extends the Demographic Parity definition in \Cref{eq:DP_gap_def} by conditioning predictions on the ground-truth target $Y$. Specifically, it enforces rate equality across both target outcomes. %($Y=0$ and $Y=1$).
    \item For regression and binary sensitive variable, Wassertein-2 distance (see \Cref{eq:w_2_discrete}) and Kolmogorov-Smirnov distance following \cite{agarwal2019fair} 
    \begin{equation}
        D_{KS} = \sup_{\tau \in \mathbb{R}}|\mathbb{P}(\hat{Y} \leq \tau | S = 0) - \mathbb{P}(\hat{Y} \leq \tau | S = 1)|
        \nonumber
    \end{equation}
    \item For binary classification and multi-class sensitive variable
    \begin{equation}
        %\Delta_{\text{max}} \text{DP} = \max_{s \neq s'}|\mathbb{P}(\hat{Y} = 1 | S = s) - \mathbb{P}(\hat{Y} = 1 | S = s')|
        \Delta_{\text{max}} \text{DP} = \max_{s}|\mathbb{P}(\hat{Y} = 1 | S = s) - \mathbb{P}(\hat{Y} = 1 | S \neq s)|
        \nonumber
    \end{equation}
\end{itemize}
%These metrics are computed using empirical probabilities on the test set, and averaged across folds.
%These probabilities are estimated empirically on the test set in order to compute these metrics.

\paragraph{Protocol} We evaluate OT-FairBoost against alternative baselines through 5-fold cross-validation across an extensive array of hyperparameter configurations. Specifically, 100 random hyperparameter configurations is employed for each model to map their joint predictive capability and fairness profiles. Models including fairness optimization are indeed particularly sensitive to their hyperparameters \cite{cruz2021promoting}. Native LightGBM hyperparameters are sampled equally among methods (more details in Appendix).

\begin{figure}[h]
    \centering
    \includegraphics[width=0.6\linewidth]{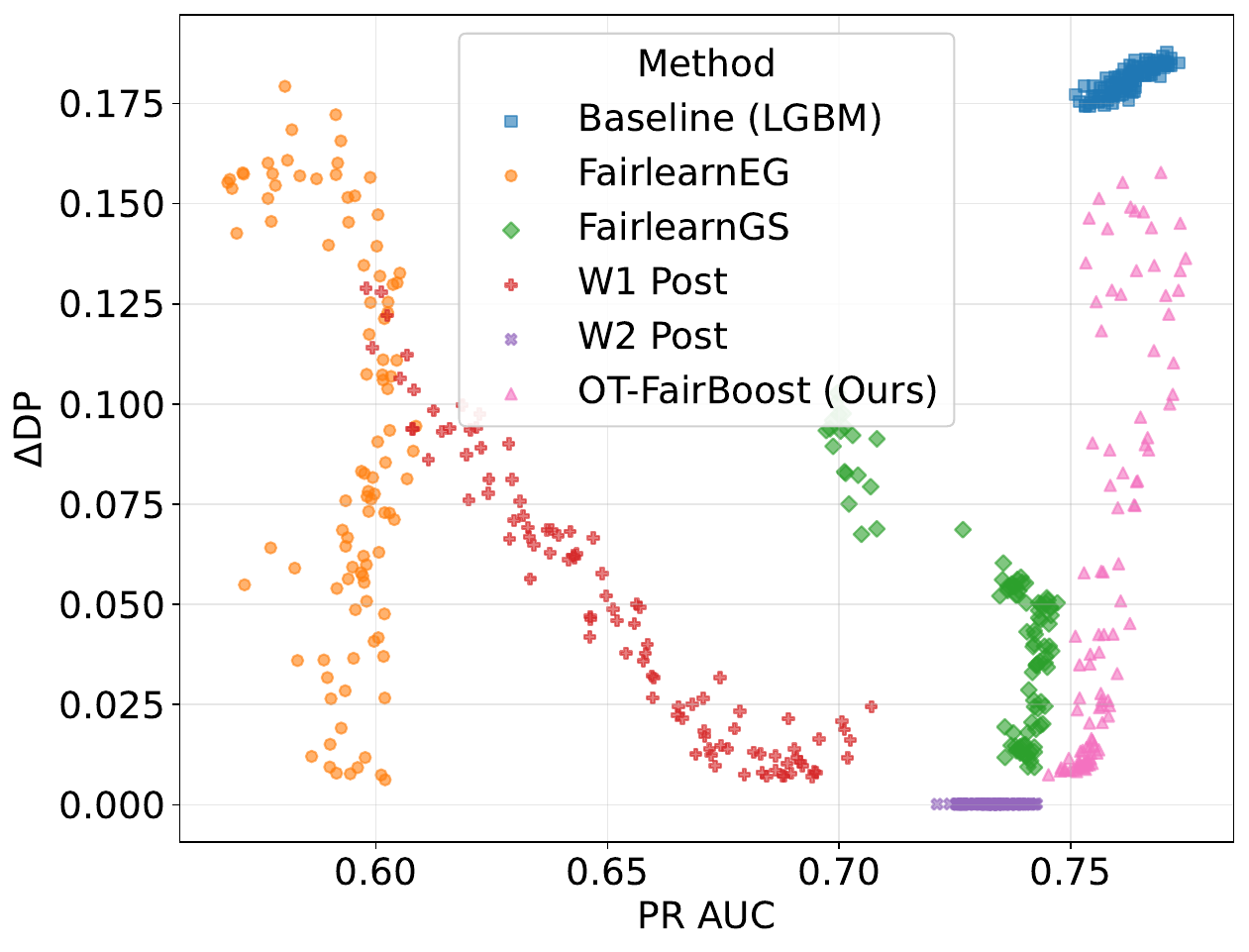}
    \caption{PR AUC vs. $\Delta$DP  of all models on Folktables Tennessee for sampled hyperparameters configurations.}
    \label{fig:several-configs-DP-folks-TN}
\end{figure}
To quantitatively evaluate this performance-fairness equilibrium, we use a scalarized multi-objective criterion introduced by \citet{cruz2022fairgbm}, $Obj_\alpha$, parameterized by a preference weight $\alpha \in [0, 1]$:
\begin{equation}
    Obj_\alpha = \alpha \cdot \text{Performance} + (1-\alpha) \cdot (1 - \text{Fairness}),
\end{equation}
where higher values signify a better trade-off. The underlying metrics are task-dependent: predictive performance is evaluated using PR AUC for binary classification and MAE for regression. For classification tasks, fairness is quantified via the Demographic Parity gap ($\Delta$DP) and Equalized Odds gap ($\Delta$EOdds). For regression, fairness is evaluated using the Wasserstein-2 distance between the group-conditional prediction distributions.
We report in \Cref{tab:fairness_metrics_main} the metrics for the configuration maximizing this objective for $\alpha=0.75$, prioritizing predictive utility while penalizing bias. FairGBM is not designed for DP, thus absent from this Table.
%These optimized configurations are reported in the upper section of \Cref{tab:fairness_metrics_main}.

\begin{table*}[t]
\centering
\caption{Binary classification on large-scale FairJob dataset with optimization on DP. $\lambda=10.0$ for OT-FairBoost.}
\label{tab:fairness_metrics_fairjob}
\setlength{\tabcolsep}{4pt} 
\small
\begin{tabular}{l|c|cccccc}
\toprule
\textbf{Metric} & \texttt{Baseline} & \texttt{FairGS} & \texttt{FairEG} &  \texttt{OT-FairBoost} & \textit{\texttt{W1Post}}$^*$ & \textit{\texttt{W2Post}}$^*$ \\
\midrule
PR AUC  & 4.3 $\pm$ 0.2   & 4.0 $\pm$ 0.3   & 1.9 $\pm$ 0.1    & \textbf{4.5 $\pm$ 0.3}   & 3.7 $\pm$ 0.2   & 4.2 $\pm$ 0.2   \\
ROC AUC & 79.4 $\pm$ 0.7  & 79.3 $\pm$ 0.6  & 64.9 $\pm$ 0.5 & \textbf{82.0 $\pm$ 0.5}  & 76.3 $\pm$ 1.3  & 79.1 $\pm$ 0.6  \\
$\Delta$DP  & 1.35 $\pm$ 0.05  & 1.49 $\pm$ 0.11  & 0.93 $\pm$ 0.30   & 1.3 $\pm$ 0.12            & \textbf{0.86 $\pm$ 0.24}  & 0.99 $\pm$ 0.28  \\
DI      & 80.98 $\pm$ 0.63 & 77.97 $\pm$ 2.09 & 86.28 $\pm$ 4.16  & \textbf{94.33 $\pm$ 0.57} & 86.82 $\pm$ 3.22 & 85.10 $\pm$ 3.67 \\
$Obj_{\alpha=0.75}$ & 27.92 $\pm$ 0.16 & 27.63 $\pm$ 0.23 & 26.17 $\pm$ 0.04  & \textbf{28.03 $\pm$ 0.22} & 27.56 $\pm$ 0.10 & 27.90 $\pm$ 0.17 \\
\midrule
Time(s) & 12.3 $\pm$ 0.4   & 149.7 $\pm$ 21.5 & 508.8 $\pm$ 47.3 & 243.2 $\pm$ 6.5           & 861.4 $\pm$ 108.5 & 244.8 $\pm$ 8.4   \\
\bottomrule
\end{tabular}
\end{table*}
\paragraph{Baselines}
%\textcolor{red}{Focus only on LightGBM variants.}
To ensure a fair and controlled comparison, our benchmark evaluates only those procedures that either natively extend LightGBM or operate as model-agnostic methods directly compatible with it. Thus, we compare our proposed procedure against an unconstrained LightGBM model, referred to as Baseline. For in-processing benchmarks, we include FairEG \cite{agarwal2018reductions},  FairGS \cite{agarwal2018reductions}, and FairGBM \cite{cruz2022fairgbm}. Additionally, we evaluate two post-processing methods, W1Post \cite{xian2023fair} and W2Post \cite{gouic2020projection}. Note that these post-processing techniques require the sensitive attribute value at inference time, a requirement that in-processing frameworks, including ours, explicitly avoid. %Consequently, in \Cref{tab:fairness_metrics_main}, the post-processing baselines W1Post and W2Post are displayed in italics on the right.

%% Seeting classif binaire+DP+ sensitive binaire. (scatter plot)
\paragraph{Results on DP}
We adopt this protocol to assess methodologies optimizing Demographic Parity (DP) on Folktables Tennessee data-set. As illustrated in \Cref{fig:several-configs-DP-folks-TN}, OT-FairBoost consistently occupies the lower-right quadrant of the objective space. This position demonstrates a superior Pareto frontier, as our method simultaneously maximizes the PR AUC while minimizing demographic disparity relative to competing baselines.
The empirical results demonstrate that OT-FairBoost achieves the highest overall objective score 81.4, highlighting its ability to maintain high predictive accuracy under stringent fairness constraints. %Notably, post-processing methods like W2Post also yield strong results 80.7.

\begin{figure}[h]
    \centering
    \includegraphics[width=0.8\linewidth]{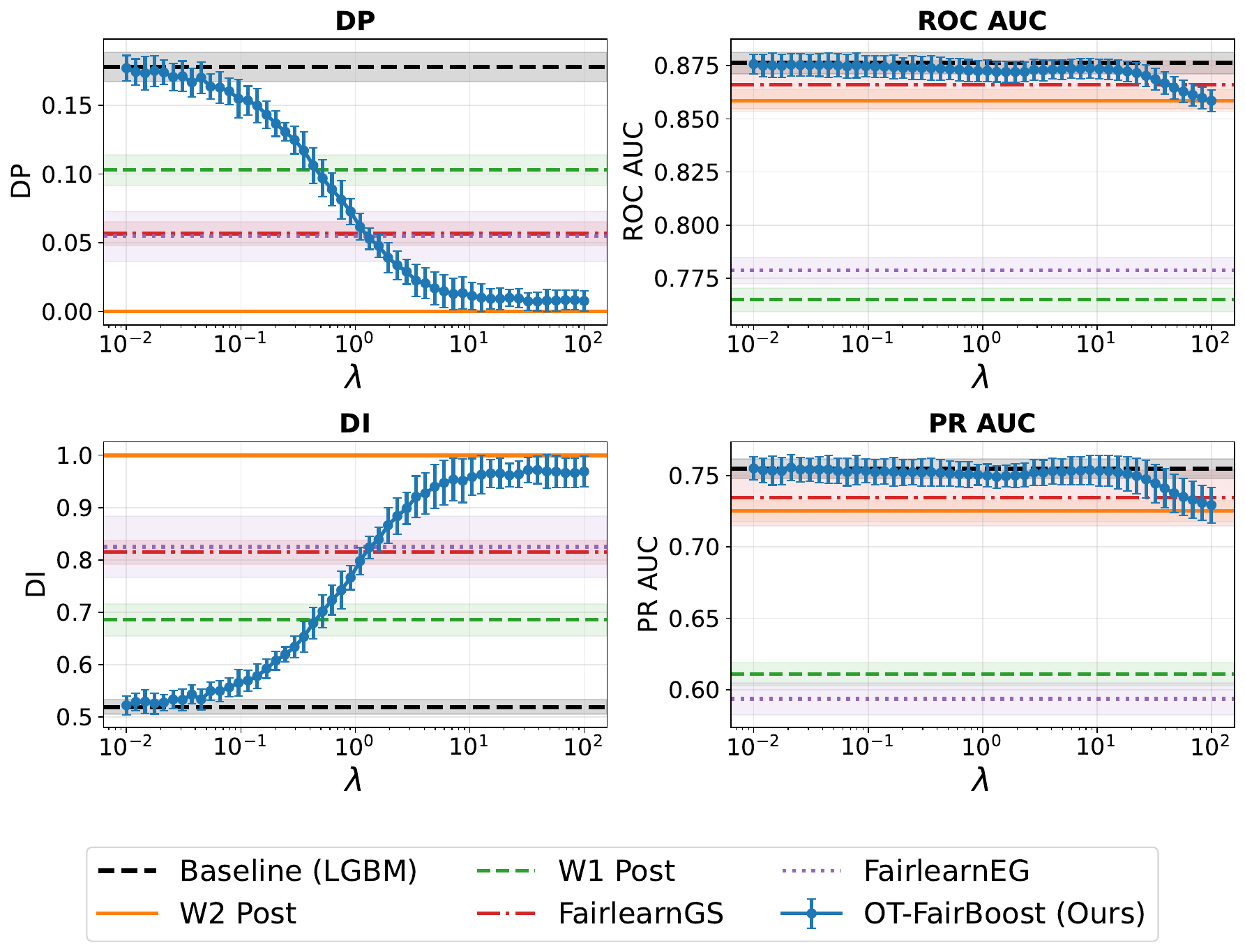}
    \caption{OT-FairBoost metrics with respect to $\lambda$ compared to other methods (other hyperparameters fixed).}
    \label{fig:tennessee_lambda_metrics}
\end{figure}

\paragraph{Trade-off between fairness and accuracy}
Finally, we analyze the sensitivity and operational stability of the proposed framework by tracking its behavior under a fixed hyperparameter configuration while varying the training fairness penalty parameter $\lambda$. The resulting trajectories are depicted in \Cref{fig:tennessee_lambda_metrics}. We observe that OT-FairBoost exhibits a highly smooth and monotonic behavior; as the regularization strength $\lambda$ scales up, demographic parity decreases systematically, incurring only a marginal, well-behaved degradation in ROC and PR AUC. This predictable compliance behavior underscores the practical viability of our method for real-world deployments.

\paragraph{Extension to Equalized Odds, Regression, Multi-Group Attributes and large-scale dataset}

To demonstrate the algorithmic versatility and architectural flexibility of OT-FairBoost, we conducted experiments following our protocol across three distinct operational objectives:
\begin{itemize}
    \item Equalized Odds (EOdds) in binary classification: evaluated on the Folktables Tennessee dataset using binary sex as the sensitive attribute.
    \item Demographic Parity (DP) in regression: evaluated on the Communities \& Crime dataset, where the sensitive feature is constructed by binarizing the proportion of Black residents at a threshold of $> 0.23$.
    \item Multi-group Demographic Parity (DP) in classification: evaluated on the Folktables Tennessee dataset using multi-category ethnicity as the sensitive attribute.
\end{itemize}

The consolidated comparative results, highlighting the optimal configuration that maximizes the scalarized trade-off objective for each baseline, are reported in \Cref{tab:fairness_metrics_main}. Their corresponding empirical plots are provided in the Appendix.

Across Equalized Odds, Regression DP, and Multi-Group DP, OT-FairBoost consistently yields the superior performance-fairness trade-off by having the highest trade-off objective value. These results demonstrate that, beyond its versatility across diverse settings, our proposed methodology remains highly competitive.

We further evaluate our approach against baselines on the large-scale, real-world FairJob dataset optimizing DP. Each methodology is evaluated via $5$-fold cross-validation using default hyperparameter settings. Due to the extreme class imbalance in this dataset, sample weighting is applied across all classifiers to obtain meaningful $\Delta\text{DP}$ and Disparate Impact ($\text{DI}$) metrics. The predictive performance, fairness metrics, and training runtimes are summarized in \Cref{tab:fairness_metrics_fairjob}. Notably, OT-FairBoost achieves the highest composite objective score of $28.05$, outperforming competitors, while maintaining a runtime comparable to W2Post, the strongest competitor.

%\textcolor{red}{Mentionner appendix davantage de figures}

\section{Conclusion}

In this work, we introduced OT-FairBoost, a novel in-processing algorithmic framework that leverages Optimal Transport to enforce fairness constraints within gradient-boosted decision trees. To do so, we derived samplewise gradients and hessians from the discrete Wasserstein-2 distance between groups' predictions, that are confirmed experimentally on oracle estimates.
When integrated in lightGBM training procedure, empirical results demonstrate that OT-FairBoost consistently achieves a superior trade-off between predictive performance and fairness, providing a scalable and controllable solution for trustworthy machine learning on tabular data.
Furthermore, OT-FairBoost flexibly adapts to standard group fairness criteria, addressing Demographic Parity as well as Equalized Odds, both in the case of classification and regression, with binary or multigroup sensitive attribute.
%\textcolor{red}{Future work ?}
%Empirical results demonstrate that our method consistently achieves a superior and operationally stable Pareto frontier, providing a scalable and controllable solution for trustworthy machine learning on tabular data.

% je censure ? Pareto frontier

\clearpage
\bibliographystyle{plainnat}
\bibliography{refs}

\begin{thebibliography}{53}
\providecommand{\natexlab}[1]{#1}
\providecommand{\url}[1]{\texttt{#1}}
\expandafter\ifx\csname urlstyle\endcsname\relax
  \providecommand{\doi}[1]{doi: #1}\else
  \providecommand{\doi}{doi: \begingroup \urlstyle{rm}\Url}\fi

\bibitem[Agarwal et~al.(2018)Agarwal, Beygelzimer, Dud{\'\i}k, Langford, and Wallach]{agarwal2018reductions}
Alekh Agarwal, Alina Beygelzimer, Miroslav Dud{\'\i}k, John Langford, and Hanna Wallach.
\newblock A reductions approach to fair classification.
\newblock In \emph{International conference on machine learning}, pages 60--69. PMLR, 2018.

\bibitem[Agarwal et~al.(2019)Agarwal, Dud{\'\i}k, and Wu]{agarwal2019fair}
Alekh Agarwal, Miroslav Dud{\'\i}k, and Zhiwei~Steven Wu.
\newblock Fair regression: Quantitative definitions and reduction-based algorithms.
\newblock In \emph{International conference on machine learning}, pages 120--129. PMLR, 2019.

\bibitem[Agarwal and Deshpande(2022)]{agarwal2022power}
Sushant Agarwal and Amit Deshpande.
\newblock On the power of randomization in fair classification and representation.
\newblock In \emph{Proceedings of the 2022 ACM Conference on Fairness, Accountability, and Transparency}, pages 1542--1551, 2022.

\bibitem[Besse et~al.(2022)Besse, del Barrio, Gordaliza, Loubes, and Risser]{Besse03042022}
Philippe Besse, Eustasio del Barrio, Paula Gordaliza, Jean-Michel Loubes, and Laurent Risser.
\newblock A survey of bias in machine learning through the prism of statistical parity.
\newblock \emph{The American Statistician}, 76\penalty0 (2):\penalty0 188--198, 2022.
\newblock \doi{10.1080/00031305.2021.1952897}.

\bibitem[Calmon et~al.(2017)Calmon, Wei, Vinzamuri, Natesan~Ramamurthy, and Varshney]{calmon2017optimized}
Flavio Calmon, Dennis Wei, Bhanukiran Vinzamuri, Karthikeyan Natesan~Ramamurthy, and Kush~R Varshney.
\newblock Optimized pre-processing for discrimination prevention.
\newblock \emph{Advances in neural information processing systems}, 30, 2017.

\bibitem[Chen and Guestrin(2016)]{chen2016xgboost}
Tianqi Chen and Carlos Guestrin.
\newblock Xgboost: A scalable tree boosting system.
\newblock In \emph{Proceedings of the 22nd acm sigkdd international conference on knowledge discovery and data mining}, pages 785--794, 2016.

\bibitem[Chouldechova(2017)]{chouldechova2017fair}
Alexandra Chouldechova.
\newblock Fair prediction with disparate impact: A study of bias in recidivism prediction instruments.
\newblock \emph{Big data}, 5\penalty0 (2):\penalty0 153--163, 2017.

\bibitem[Chzhen et~al.(2020)Chzhen, Denis, Hebiri, Oneto, and Pontil]{chzhen2020fair}
Evgenii Chzhen, Christophe Denis, Mohamed Hebiri, Luca Oneto, and Massimiliano Pontil.
\newblock Fair regression with wasserstein barycenters.
\newblock \emph{Advances in Neural Information Processing Systems}, 33:\penalty0 7321--7331, 2020.

\bibitem[Corbett-Davies and Goel(2018)]{corbett2018measure}
Sam Corbett-Davies and Sharad Goel.
\newblock The measure and mismeasure of fairness: A critical review of fair machine learning.
\newblock \emph{arXiv preprint arXiv:1808.00023}, 14, 2018.

\bibitem[Cotter et~al.(2019)Cotter, Jiang, Gupta, Wang, Narayan, You, and Sridharan]{cotter2019optimization}
Andrew Cotter, Heinrich Jiang, Maya Gupta, Serena Wang, Taman Narayan, Seungil You, and Karthik Sridharan.
\newblock Optimization with non-differentiable constraints with applications to fairness, recall, churn, and other goals.
\newblock \emph{Journal of Machine Learning Research}, 20\penalty0 (172):\penalty0 1--59, 2019.

\bibitem[Cruz et~al.(2021)Cruz, Saleiro, Bel{\'e}m, Soares, and Bizarro]{cruz2021promoting}
Andr{\'e}~F Cruz, Pedro Saleiro, Catarina Bel{\'e}m, Carlos Soares, and Pedro Bizarro.
\newblock Promoting fairness through hyperparameter optimization.
\newblock In \emph{2021 IEEE international conference on data mining (ICDM)}, pages 1036--1041. IEEE, 2021.

\bibitem[Cruz et~al.(2022)Cruz, Bel{\'e}m, Jesus, Bravo, Saleiro, and Bizarro]{cruz2022fairgbm}
Andr{\'e}~F Cruz, Catarina Bel{\'e}m, S{\'e}rgio Jesus, Jo{\~a}o Bravo, Pedro Saleiro, and Pedro Bizarro.
\newblock Fairgbm: Gradient boosting with fairness constraints.
\newblock \emph{arXiv preprint arXiv:2209.07850}, 2022.

\bibitem[Ding et~al.(2021)Ding, Hardt, Miller, and Schmidt]{ding2021retiring}
Frances Ding, Moritz Hardt, John Miller, and Ludwig Schmidt.
\newblock Retiring adult: New datasets for fair machine learning.
\newblock \emph{Advances in neural information processing systems}, 34:\penalty0 6478--6490, 2021.

\bibitem[Donini et~al.(2018)Donini, Oneto, Ben-David, Shawe-Taylor, and Pontil]{donini2018empirical}
Michele Donini, Luca Oneto, Shai Ben-David, John~S Shawe-Taylor, and Massimiliano Pontil.
\newblock Empirical risk minimization under fairness constraints.
\newblock \emph{Advances in neural information processing systems}, 31, 2018.

\bibitem[Dwork et~al.(2012)Dwork, Hardt, Pitassi, Reingold, and Zemel]{dwork2012fairness}
Cynthia Dwork, Moritz Hardt, Toniann Pitassi, Omer Reingold, and Richard Zemel.
\newblock Fairness through awareness.
\newblock In \emph{Proceedings of the 3rd innovations in theoretical computer science conference}, pages 214--226, 2012.

\bibitem[Feldman et~al.(2015)Feldman, Friedler, Moeller, Scheidegger, and Venkatasubramanian]{feldman2015certifying}
Michael Feldman, Sorelle~A Friedler, John Moeller, Carlos Scheidegger, and Suresh Venkatasubramanian.
\newblock Certifying and removing disparate impact.
\newblock In \emph{proceedings of the 21th ACM SIGKDD international conference on knowledge discovery and data mining}, pages 259--268, 2015.

\bibitem[Gaucher et~al.(2023)Gaucher, Schreuder, and Chzhen]{gaucher2023fair}
Solenne Gaucher, Nicolas Schreuder, and Evgenii Chzhen.
\newblock Fair learning with wasserstein barycenters for non-decomposable performance measures.
\newblock In \emph{International Conference on Artificial Intelligence and Statistics}, pages 2436--2459. PMLR, 2023.

\bibitem[Gordaliza et~al.(2019)Gordaliza, Del~Barrio, Fabrice, and Loubes]{gordaliza2019obtaining}
Paula Gordaliza, Eustasio Del~Barrio, Gamboa Fabrice, and Jean-Michel Loubes.
\newblock Obtaining fairness using optimal transport theory.
\newblock In \emph{International conference on machine learning}, pages 2357--2365. PMLR, 2019.

\bibitem[Gouic et~al.(2020)Gouic, Loubes, and Rigollet]{gouic2020projection}
Thibaut~Le Gouic, Jean-Michel Loubes, and Philippe Rigollet.
\newblock Projection to fairness in statistical learning.
\newblock \emph{arXiv preprint arXiv:2005.11720}, 2020.

\bibitem[Grari et~al.(2019)Grari, Ruf, Lamprier, and Detyniecki]{grari2019fair}
Vincent Grari, Boris Ruf, Sylvain Lamprier, and Marcin Detyniecki.
\newblock Fair adversarial gradient tree boosting.
\newblock In \emph{2019 IEEE international conference on data mining (ICDM)}, pages 1060--1065. IEEE, 2019.

\bibitem[Grinsztajn et~al.(2022)Grinsztajn, Oyallon, and Varoquaux]{grinsztajn2022tree}
L{\'e}o Grinsztajn, Edouard Oyallon, and Ga{\"e}l Varoquaux.
\newblock Why do tree-based models still outperform deep learning on typical tabular data?
\newblock \emph{Advances in neural information processing systems}, 35:\penalty0 507--520, 2022.

\bibitem[Hardt et~al.(2016)Hardt, Price, and Srebro]{hardt2016equality}
Moritz Hardt, Eric Price, and Nati Srebro.
\newblock Equality of opportunity in supervised learning.
\newblock \emph{Advances in neural information processing systems}, 29, 2016.

\bibitem[Hollmann et~al.(2025)Hollmann, M{\"u}ller, Purucker, Krishnakumar, K{\"o}rfer, Hoo, Schirrmeister, and Hutter]{hollmann2025accurate}
Noah Hollmann, Samuel M{\"u}ller, Lennart Purucker, Arjun Krishnakumar, Max K{\"o}rfer, Shi~Bin Hoo, Robin~Tibor Schirrmeister, and Frank Hutter.
\newblock Accurate predictions on small data with a tabular foundation model.
\newblock \emph{Nature}, 637\penalty0 (8045):\penalty0 319--326, 2025.

\bibitem[Jiang et~al.(2020)Jiang, Pacchiano, Stepleton, Jiang, and Chiappa]{jiang2020wasserstein}
Ray Jiang, Aldo Pacchiano, Tom Stepleton, Heinrich Jiang, and Silvia Chiappa.
\newblock Wasserstein fair classification.
\newblock In \emph{Uncertainty in artificial intelligence}, pages 862--872. PMLR, 2020.

\bibitem[Kamiran and Calders(2009)]{kamiran2009classifying}
Faisal Kamiran and Toon Calders.
\newblock Classifying without discriminating.
\newblock In \emph{2009 2nd international conference on computer, control and communication}, pages 1--6. IEEE, 2009.

\bibitem[Kamiran and Calders(2012)]{kamiran2012data}
Faisal Kamiran and Toon Calders.
\newblock Data preprocessing techniques for classification without discrimination.
\newblock \emph{Knowledge and Information Systems}, 33\penalty0 (1):\penalty0 1--33, 2012.

\bibitem[Kamiran et~al.(2012)Kamiran, Karim, and Zhang]{kamiran2012decision}
Faisal Kamiran, Asim Karim, and Xiangliang Zhang.
\newblock Decision theory for discrimination-aware classification.
\newblock In \emph{2012 IEEE 12th international conference on data mining}, pages 924--929. IEEE, 2012.

\bibitem[Ke et~al.(2017)Ke, Meng, Finley, Wang, Chen, Ma, Ye, and Liu]{ke2017lightgbm}
Guolin Ke, Qi~Meng, Thomas Finley, Taifeng Wang, Wei Chen, Weidong Ma, Qiwei Ye, and Tie-Yan Liu.
\newblock Lightgbm: A highly efficient gradient boosting decision tree.
\newblock \emph{Advances in neural information processing systems}, 30, 2017.

\bibitem[Khalilia et~al.(2011)Khalilia, Chakraborty, and Popescu]{RF-medical-example}
Mohammed Khalilia, Sounak Chakraborty, and Mihail Popescu.
\newblock Predicting disease risks from highly imbalanced data using random forest.
\newblock \emph{BMC medical informatics and decision making}, 11\penalty0 (1):\penalty0 51, 2011.

\bibitem[Kohavi and Becker(1996)]{adultCensus96}
R.~Kohavi and B.~Becker.
\newblock Uci adult data set.
\newblock UCI Meachine Learning Repository, 5 1996.
\newblock DOI: 10.24432/C5XW20.

\bibitem[Louizos et~al.(2015)Louizos, Swersky, Li, Welling, and Zemel]{louizos2015variational}
Christos Louizos, Kevin Swersky, Yujia Li, Max Welling, and Richard Zemel.
\newblock The variational fair autoencoder.
\newblock \emph{arXiv preprint arXiv:1511.00830}, 2015.

\bibitem[Mehrabi et~al.(2021)Mehrabi, Morstatter, Saxena, Lerman, and Galstyan]{mehrabi2021survey}
Ninareh Mehrabi, Fred Morstatter, Nripsuta Saxena, Kristina Lerman, and Aram Galstyan.
\newblock A survey on bias and fairness in machine learning.
\newblock \emph{ACM computing surveys (CSUR)}, 54\penalty0 (6):\penalty0 1--35, 2021.

\bibitem[Nguyen and Duong(2021)]{nguyen2021comparison}
Nam~N Nguyen and Anh~T Duong.
\newblock Comparison of two main approaches for handling imbalanced data in churn prediction problem.
\newblock \emph{Journal of advances in information technology}, 12\penalty0 (1), 2021.

\bibitem[Peyr{\'e} and Cuturi(2019)]{peyre2019computational}
Gabriel Peyr{\'e} and Marco Cuturi.
\newblock \emph{Computational optimal transport: With applications to data science}.
\newblock Now Foundations and Trends, 2019.

\bibitem[Qu et~al.(2025)Qu, Holzmuller, Varoquaux, and Morvan]{qu2025tabicl}
Jingang Qu, David Holzmuller, Gael Varoquaux, and Marine~Le Morvan.
\newblock Tabicl: A tabular foundation model for in-context learning on large data.
\newblock \emph{arXiv preprint arXiv:2502.05564}, 2025.

\bibitem[Qu et~al.(2026)Qu, Holzmuller, Varoquaux, and Morvan]{qu2026tabiclv2}
Jingang Qu, David Holzmuller, Gael Varoquaux, and Marine~Le Morvan.
\newblock Tabiclv2: A better, faster, scalable, and open tabular foundation model.
\newblock \emph{arXiv preprint arXiv:2602.11139}, 2026.

\bibitem[Ravichandran et~al.(2020)Ravichandran, Khurana, Venkatesh, and Edakunni]{ravichandran2020fairxgboost}
Srinivasan Ravichandran, Drona Khurana, Bharath Venkatesh, and Narayanan~Unny Edakunni.
\newblock Fairxgboost: Fairness-aware classification in xgboost.
\newblock \emph{arXiv preprint arXiv:2009.01442}, 2020.

\bibitem[Redmond and Baveja(2002)]{redmond2002data}
Michael Redmond and Alok Baveja.
\newblock A data-driven software tool for enabling cooperative information sharing among police departments.
\newblock \emph{European Journal of Operational Research}, 141\penalty0 (3):\penalty0 660--678, 2002.

\bibitem[{Regulation (EU) 2024/1689}(2024)]{aiact}
{Regulation (EU) 2024/1689}.
\newblock Regulation (eu) 2024/1689 of the european parliament and of the council of 13 june 2024 laying down harmonised rules on artificial intelligence (artificial intelligence act).
\newblock Official Journal of the European Union, L 2024/1689., 2024.
\newblock https://eur-lex.europa.eu/eli/reg/2024/1689/oj/eng.

\bibitem[Risser et~al.(2022)Risser, Sanz, Vincenot, and Loubes]{risser2022tackling}
Laurent Risser, Alberto~Gonzalez Sanz, Quentin Vincenot, and Jean-Michel Loubes.
\newblock Tackling algorithmic bias in neural-network classifiers using wasserstein-2 regularization.
\newblock \emph{Journal of Mathematical Imaging and Vision}, 64\penalty0 (6):\penalty0 672--689, 2022.

\bibitem[Sakho et~al.(2025)Sakho, Malherbe, Gauthier, and Scornet]{sakho2025harnessing}
Abdoulaye Sakho, Emmanuel Malherbe, Carl-Erik Gauthier, and Erwan Scornet.
\newblock Harnessing mixed features for imbalance data oversampling: Application to bank customers scoring.
\newblock In \emph{Joint European Conference on Machine Learning and Knowledge Discovery in Databases}, pages 247--264. Springer, 2025.

\bibitem[Shilova et~al.(2025)Shilova, Malherbe, Palma, Risser, and Loubes]{shilova2025fairness}
Veronika Shilova, Emmanuel Malherbe, Giovanni Palma, Laurent Risser, and Jean-Michel Loubes.
\newblock Fairness-aware grouping for continuous sensitive variables: Application for debiasing face analysis with respect to skin tone.
\newblock \emph{arXiv preprint arXiv:2507.11247}, 2025.

\bibitem[Shwartz-Ziv and Armon(2022)]{shwartz2022tabular}
Ravid Shwartz-Ziv and Amitai Armon.
\newblock Tabular data: Deep learning is not all you need.
\newblock \emph{Information fusion}, 81:\penalty0 84--90, 2022.

\bibitem[Sun et~al.(2009)Sun, Wong, and Kamel]{sun2009classification}
Yanmin Sun, Andrew~KC Wong, and Mohamed~S Kamel.
\newblock Classification of imbalanced data: A review.
\newblock \emph{International journal of pattern recognition and artificial intelligence}, 23\penalty0 (04):\penalty0 687--719, 2009.

\bibitem[Verma and Rubin(2018)]{verma2018fairness}
Sahil Verma and Julia Rubin.
\newblock Fairness definitions explained.
\newblock In \emph{Proceedings of the international workshop on software fairness}, pages 1--7, 2018.

\bibitem[Vladimirova et~al.(2024)Vladimirova, Diemert, and Pavone]{vladimirova2024fairjob}
Mariia Vladimirova, Eustache Diemert, and Federico Pavone.
\newblock Fairjob: A real-world dataset for fairness in online systems.
\newblock \emph{Advances in Neural Information Processing Systems}, 37:\penalty0 10442--10469, 2024.

\bibitem[Wei et~al.(2020)Wei, Ramamurthy, and Calmon]{wei2020optimized}
Dennis Wei, Karthikeyan~Natesan Ramamurthy, and Flavio~P Calmon.
\newblock Optimized score transformation for fair classification.
\newblock In \emph{AISTATS}, volume~20, pages 1673--1683, 2020.

\bibitem[Xian et~al.(2023)Xian, Yin, and Zhao]{xian2023fair}
Ruicheng Xian, Lang Yin, and Han Zhao.
\newblock Fair and optimal classification via post-processing.
\newblock In \emph{International conference on machine learning}, pages 37977--38012. PMLR, 2023.

\bibitem[Xu et~al.(2018)Xu, Yuan, Zhang, and Wu]{xu2018fairgan}
Depeng Xu, Shuhan Yuan, Lu~Zhang, and Xintao Wu.
\newblock Fairgan: Fairness-aware generative adversarial networks.
\newblock In \emph{2018 IEEE international conference on big data (big data)}, pages 570--575. IEEE, 2018.

\bibitem[Zafar et~al.(2017)Zafar, Valera, Rogriguez, and Gummadi]{zafar2017fairness}
Muhammad~Bilal Zafar, Isabel Valera, Manuel~Gomez Rogriguez, and Krishna~P Gummadi.
\newblock Fairness constraints: Mechanisms for fair classification.
\newblock In \emph{Artificial intelligence and statistics}, pages 962--970. PMLR, 2017.

\bibitem[Zemel et~al.(2013)Zemel, Wu, Swersky, Pitassi, and Dwork]{zemel2013learning}
Rich Zemel, Yu~Wu, Kevin Swersky, Toni Pitassi, and Cynthia Dwork.
\newblock Learning fair representations.
\newblock In \emph{International conference on machine learning}, pages 325--333. PMLR, 2013.

\bibitem[Zhang et~al.(2018)Zhang, Lemoine, and Mitchell]{zhang2018mitigating}
Brian~Hu Zhang, Blake Lemoine, and Margaret Mitchell.
\newblock Mitigating unwanted biases with adversarial learning.
\newblock In \emph{Proceedings of the 2018 AAAI/ACM Conference on AI, Ethics, and Society}, pages 335--340, 2018.

\bibitem[{\v{Z}}liobait{\.e}(2017)]{vzliobaite2017measuring}
Indr{\.e} {\v{Z}}liobait{\.e}.
\newblock Measuring discrimination in algorithmic decision making.
\newblock \emph{Data Mining and Knowledge Discovery}, 31\penalty0 (4):\penalty0 1060--1089, 2017.

\end{thebibliography}

%\section*{Acknowledgments}
%Blabla ????

\clearpage
\appendix
\clearpage
\section{Details on numerical experiments}
All experiments were conducted on an AMD Ryzen Threadripper PRO 5955WX workstation (16 cores, 4.0\,GHz) equipped with 256\,GB of RAM.

The implementation details for our 5-fold cross-validation protocol are structured as follows:
\begin{itemize}
    \item Preprocessing: Categorical features in the Folktables dataset are encoded using a \texttt{OneHotEncoder}.
\end{itemize}

The shared LightGBM hyperparameter search space used across all models is detailed in \Cref{tab:lgbm_hyperparameters}, whereas method-specific hyperparameter configurations are summarized in \Cref{tab:method_hyperparameters}.
\begin{table}[h]
\centering
\caption{LightGBM hyperparameter search space and fixed experimental settings.}
\label{tab:lgbm_hyperparameters}
\small
\begin{tabular}{llcc}
\toprule
\multicolumn{4}{c}{\textbf{Varied Hyperparameters (Search Space)}} \\
\midrule
\textbf{Parameter} & \textbf{Type} & \textbf{Search Range} & \textbf{Scale} \\
\midrule
\texttt{num\_leaves} & Integer & $[16, 64]$ & Uniform \\
\texttt{max\_depth} & Integer & $[4, 10]$ & Uniform \\
\texttt{reg\_lambda} & Float & $[10^{-10}, 10^{-1}]$ & Log-uniform \\
\texttt{feature\_fraction} & Float & $[0.7, 1.0]$ & Uniform \\
\texttt{bagging\_fraction} & Float & $[0.7, 1.0]$ & Uniform \\
\midrule
\multicolumn{4}{c}{\textbf{Fixed Hyperparameters}} \\
\midrule
\textbf{Parameter} & \textbf{Value} & \multicolumn{2}{l}{\textbf{Description / Default}} \\
\midrule
\texttt{boosting\_type} & \texttt{"gbdt"} & \multicolumn{2}{l}{Gradient Boosted Decision Trees} \\
\texttt{n\_estimators} & $1000$ & \multicolumn{2}{l}{Number of boosting iterations} \\
\texttt{learning\_rate} & $0.1$ & \multicolumn{2}{l}{Shrinkage rate} \\
\texttt{reg\_alpha} & $0.0$ & \multicolumn{2}{l}{$L_1$ regularization} \\
\texttt{bagging\_freq} & $1$ & \multicolumn{2}{l}{Bagging frequency} \\
\texttt{enable\_bundle} & \texttt{True} & \multicolumn{2}{l}{Exclusive feature bundling} \\
\texttt{verbosity} & $-1$ & \multicolumn{2}{l}{Silent logging} \\
\texttt{seed} & \texttt{random\_seed} & \multicolumn{2}{l}{Fixed for reproducibility} \\
\bottomrule
\end{tabular}
\end{table}

\begin{table}[h]
\centering
\caption{Method-specific hyperparameter search spaces.}
\label{tab:method_hyperparameters}
\small
\begin{tabular}{llccc}
\toprule
\textbf{Method} & \textbf{Parameter} & \textbf{Type} & \textbf{Search Range} & \textbf{Sampling Scale} \\
\midrule
\multirow{2}{*}{\texttt{OT-FairBoost} (Ours)} & Fairness penalty ($\lambda$) & Float & $[0.1, 20.0]$ & Log-uniform \\
 & KDE Bandwidth & Float & $[0.01, 2.0]$ & Uniform \\
\midrule
\textit{\texttt{W1Post}} / \textit{\texttt{W2Post}} & Penalty weight & Float & $[10^{-4}, 0.05]$ & Uniform \\
\midrule
\texttt{FairLearn} (\texttt{EG} / \texttt{GS}) & DP Bound & Float & $[0.005, 0.1]$ & Uniform \\
\midrule
\texttt{FairGBM} & Multiplier LR & Float & $[10^{-3}, 1.0]$ & Uniform \\
\bottomrule
\end{tabular}
\end{table}

\section{Supplementary numerical illustrations}
In this section, several results regarding our numerical experiments are depicted.

\Cref{fig:scattter-eodds-pr} displays the training configurations for OT-FairBoost and competitor models, plotting PR AUC on the x-axis against the Equalized Odds gap ($\Delta\text{EOdds}$) on the y-axis. While FairGBM achieves strong predictive accuracy in certain configurations, its performance is widely scattered across regions that yield no systematic gain in either fairness or predictive power. In contrast, our proposed methodology consistently establishes the superior accuracy–fairness trade-off.

Similarly to \Cref{fig:scattter-eodds-pr}, \Cref{fig:scattter-multigroup-dponevsrest} illustrates the performance of OT-FairBoost in binary classification under multi-group sensitive attributes optimized for Demographic Parity ($\Delta\text{DP}$). Our method consistently achieves the optimal trade-off, with no competing baseline demonstrating comparable Pareto efficiency.

Finally, \Cref{fig:scattter-regression-w2-mae} presents the empirical results for continuous target settings (regression) with a binary sensitive attribute under DP constraints, evaluated using the Wasserstein-2 distance ($W_2$) and Mean Absolute Error ($\text{MAE}$) as the fairness and predictive metrics, respectively. In this regime, OT-FairBoost attains low $W_2$ parity gaps while maintaining a lower MAE than post-processing alternatives such as W2Post, thereby delivering a superior overall accuracy–fairness trade-off.
\begin{figure*}[htbp]
    \centering
    \begin{subfigure}[b]{0.32\textwidth}
        \centering
        \includegraphics[width=\linewidth]{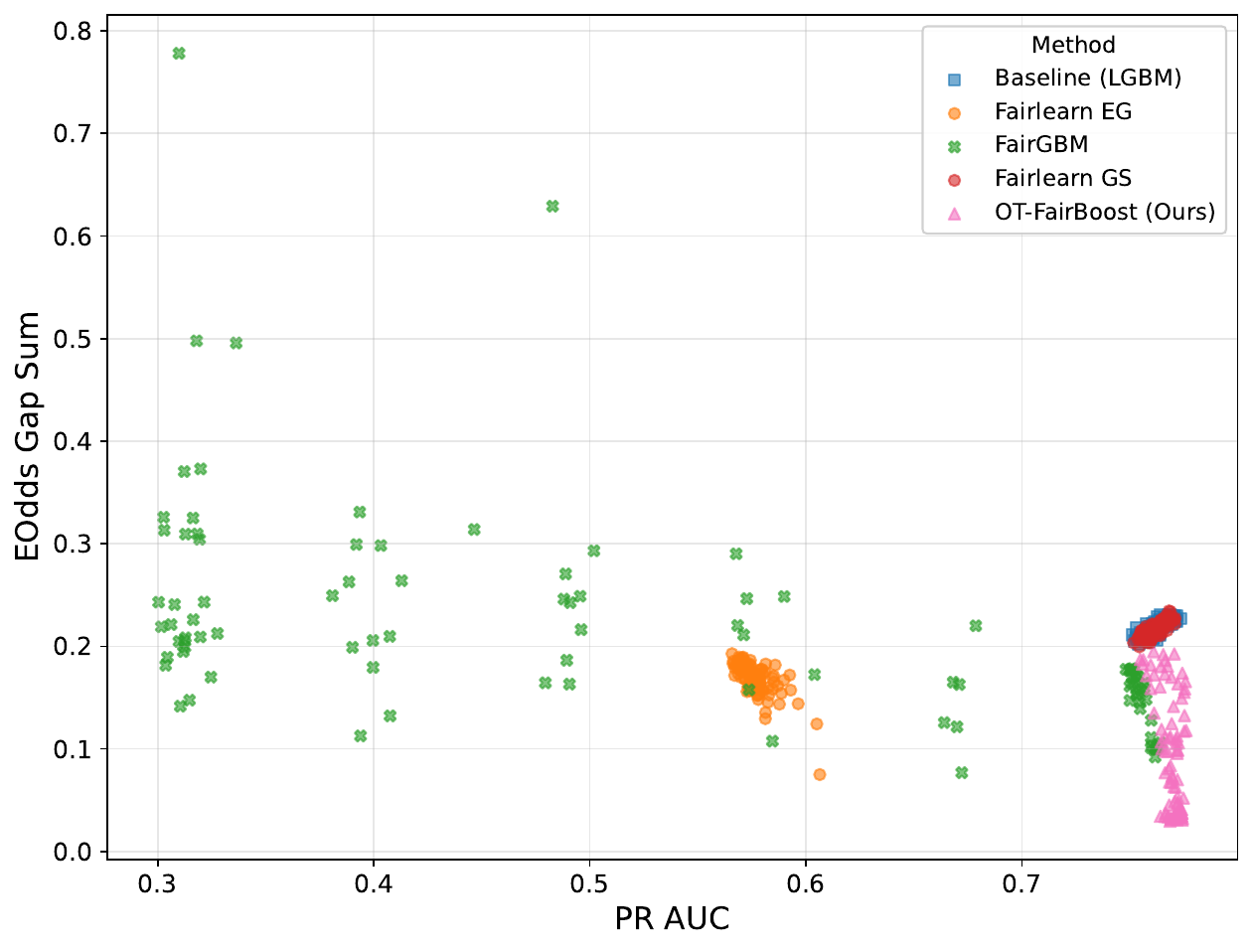}
        \caption{PR AUC vs. Equalized Odds gap for binary classification task.}
        \label{fig:scattter-eodds-pr}
    \end{subfigure}
    \hfill
    \begin{subfigure}[b]{0.32\textwidth}
        \centering
        \includegraphics[width=\linewidth]{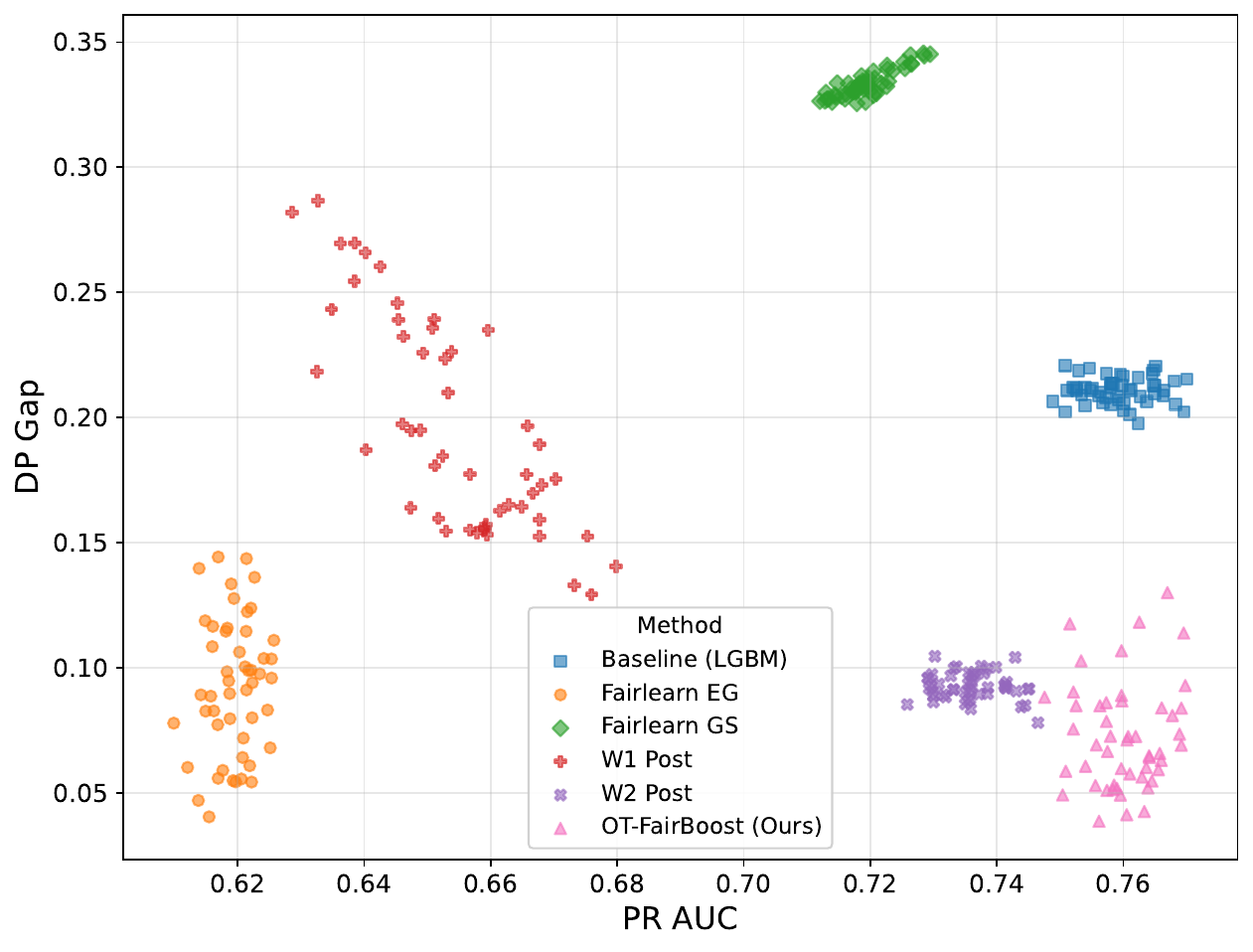}
        \caption{PR AUC vs. DP gap with multi-group sensitive attributes.}
        \label{fig:scattter-multigroup-dponevsrest}
    \end{subfigure}
    \hfill
    \begin{subfigure}[b]{0.32\textwidth}
        \centering
        \includegraphics[width=\linewidth]{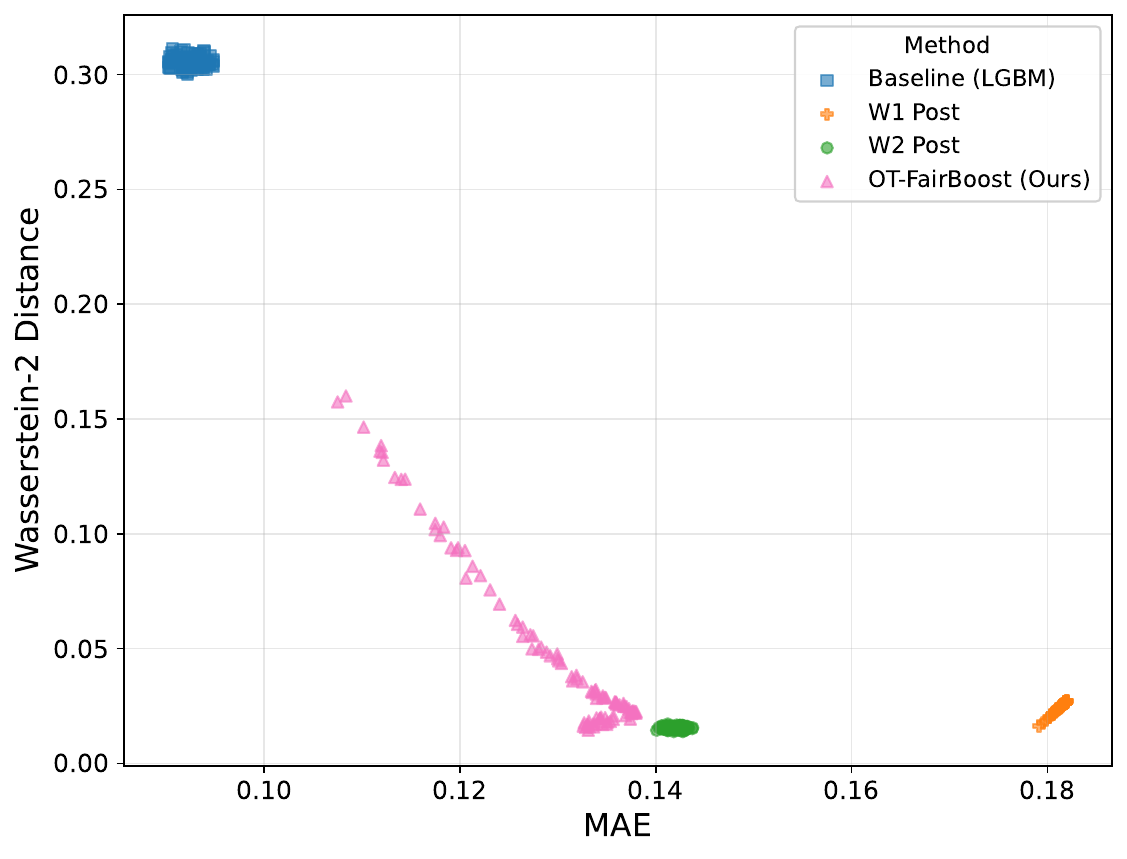}
        \caption{MAE vs. $\mathcal{W}_2$ for regression with binary sensitive attributes.}
        \label{fig:scattter-regression-w2-mae}
    \end{subfigure}
    
    \caption{Performance comparisons across different classification and regression tasks.}
    \label{fig:scatter-plots-combined}
\end{figure*}

%\clearpage
\section{Proof of Proposition 1}

\begin{proof}{}
\label{proof:proposition:main-derivatives}
The right-hand partial derivatives is defined as
$$
\frac{\partial^+ \mathcal{W}_2^2(\mu^{\mathcal{Z}^0}, \mu^{\mathcal{Z}^1})}{\partial {\hat z^0_i}}  = \lim_{\substack{\varepsilon \to 0 \\ \varepsilon>0}}
\frac{\mathcal{W}_2^2(\mu^{\mathcal{Z}_\varepsilon^0}, \mu^{\mathcal{Z}^1}) -
\mathcal{W}_2^2(\mu^{\mathcal{Z}^0}, \mu^{\mathcal{Z}^1})}{\varepsilon}
$$

and similarly for the hessian
$$
\frac{\partial^{2+} \mathcal{W}_2^2(\mu^{\mathcal{Z}^0}, \mu^{\mathcal{Z}^1})}{\partial {(\hat z^0_i})^2}=
\lim_{\substack{\varepsilon \to 0 \\ \varepsilon>0}} \frac{\mathcal{W}_2^2(\mu^{\mathcal{Z}_{2\varepsilon}^0}, \mu^{\mathcal{Z}^1}) - 2
\mathcal{W}_2^2(\mu^{\mathcal{Z}^0_{\varepsilon}}, \mu^{\mathcal{Z}^1}) + \mathcal{W}_2^2(\mu^{\mathcal{Z}^0}, \mu^{\mathcal{Z}^1})}{\varepsilon^2}
$$
%where we write $\mathcal{U}_\varepsilon^0$  the set of distinct values from $\mathcal{Y}_\varepsilon^0$, with the corresponding probabilities.
where $\frac{\partial^+ f}{\partial x}$ denotes the right-hand partial derivatives of function $f$ with respect to $x$.

In the following, we denote $\gamma^*$  the coupling matrix between $\mu^{\mathcal{Z}_\varepsilon^0}$ and $ \mu^{\mathcal{Z}^1}$, as defined in \Cref{eq:w2_unique}. It is unique for sufficiently small $\varepsilon > 0$ that thus keeps the same order between values in $\mathcal{U}_\varepsilon^0$ and their respective weights, i.e. $\varepsilon < \min_{i \neq j} |\hat z^0_i - \hat z^0_j|$. We will write the perturbed value $u^0_{d(i)} = \hat z^0_i + \varepsilon$, where $d$ is the mapping between $\mathcal{Z}^0_\varepsilon$ and $\mathcal{U}^0_\varepsilon$. 

In the case where $\hat z_i^0$ is unique, $|\mathcal{U}^0|=|\mathcal{U}^0_\varepsilon|$, $d$ is also a valid mapping for $\mathcal{Z}^0$ and $\mathcal{U}^0$, and we can isolate the term $j=d(i)$ in the sum of \Cref{eq:w_2_discrete}:
\begin{align}
\mathcal{W}_2^2(\mu^{\mathcal{Z}_\varepsilon^0}, \mu^{\mathcal{Z}^1}) \nonumber
& = \sum_{\substack{j=1 \\ j\neq d(i)}}^{|\mathcal{U}_\varepsilon^0|} \sum_{k=1}^{|\mathcal{U}^1|} \gamma^*_{j,k} ( u^0_j - u^1_k )^2 + \sum_{k=1}^{|\mathcal{U}^1|} \gamma^*_{d(i),k} ( \hat z^0_{i} + \varepsilon - u^1_k)^2
\nonumber
\\
& = \mathcal{W}^2_2(\mu^{\mathcal{Z}^0}, \mu^{\mathcal{Z}^1})
 + \sum_{k=1}^{|\mathcal{U}^1|} \gamma^*_{d(i),k} \big(( \hat z^0_i + \varepsilon - u^1_k )^2 - ( \hat z^0_i - u^1_k )^2\big)
\nonumber
\\
& = 
\mathcal{W}^2_2(\mu^{\mathcal{Z}^0}, \mu^{\mathcal{Z}^1})
+ \sum_{k=1}^{|\mathcal{U}^1|} \gamma^*_{d(i),k} \Big(  2 \varepsilon\big(\hat z^0_i - u^1_k\big) +\varepsilon^2 \Big)
\nonumber
\\
&=
\mathcal{W}^2_2(\mu^{\mathcal{Z}^0}, \mu^{\mathcal{Z}^1}) \nonumber
 + p^0\big(u_{d(i)}^0\big) \left(2\varepsilon\left(\hat z^0_i - T_{\mathcal{U}^0\to \mathcal{U}^1}\big(\hat z^0_i\big)\right) +\varepsilon^2 \right)
\nonumber
\end{align}
thus proving the gradient and hessian expressions, noting $p^0\big(u_{d(i)}^0\big) = \frac{1}{n^0}$. Please note that in this case, the derivative holds for any $\varepsilon$, and $\mathcal{W}^2_2(\mu^{\mathcal{Z}^0}, \mu^{\mathcal{Z}^1})$ is $\mathcal{C}^\infty$.

If $\hat z^0_i$ is not unique in $\mathcal{Z}^0$, then $|\mathcal{U}^0_\varepsilon|=|\mathcal{U}^0|+1$, and we have $u^0_{d(i)-1}=\hat z^0_i$ and $u^0_{d(i)}=\hat z^0_i + \varepsilon$.
If we isolate the perturbed index $d(i)$, we have:

\begin{align}
\mathcal{W}_2^2(\mu^{\mathcal{Z}_\varepsilon^0}, \mu^{\mathcal{Z}^1}) \nonumber =& \sum_{\substack{j=1 \\ j\neq d(i)}}^{|\mathcal{U}_\varepsilon^0|} \sum_{k=1}^{|\mathcal{U}^1|} \gamma^*_{j,k} ( u^0_j - u^1_k )^2 + \sum_{k=1}^{|\mathcal{U}^1|} \gamma^*_{d(i),k} ( \hat z^0_{i} - u^1_k)^2 \\
 &+ \sum_{k=1}^{|\mathcal{U}^1|} \gamma^*_{d(i),k} \big(( \hat z^0_{i} + \varepsilon - u^1_k )^2 - ( \hat z^0_{i} - u^1_k )^2\big)
\end{align}

The first line equals $\mathcal{W}_2^2(\mu^{\mathcal{Y}^0}, \mu^{\mathcal{Y}^0})$. There are indeed two terms in the sum with the same $u^0_j$ value (equals $\hat z^0_i$), that we can thus add together as
$$
\sum_{j=1}^{|\mathcal{U}^0|} \sum_{k=1}^{|\mathcal{U}^1|} \gamma_{j,k} ( u^0_j - u^1_k )^2
$$
where 
$$
\gamma_{j,k} = \begin{cases} 
  \gamma^*_{j,k} & \text{if } j < d(i)-1 \\
  \gamma^*_{d(i)-1,k} + \gamma^*_{d(i),k}   & \text{if } j = d(i) \\
  \gamma^*_{j+1,k} & \text{if } j \ge d(i) 
\end{cases}
$$
and is the solution for $\mathcal{W}_2^2(\mu^{\mathcal{Z}^0}, \mu^{\mathcal{Z}^0})$, since it corresponds to the correct iterative construction of $\gamma$, with allocations merged for $d(i)-1$ and $d(i)$.

We can then follow the same derivation as above in the case of unique $\hat z^0_i$ value, and obtain the same gradient and hessian expressions.
In this case, we need $\varepsilon>0$ in order to keep a valid ranking among values so that $\gamma$ is a solution for $\mathcal{W}_2^2(\mu^{\mathcal{Z}^0}, \mu^{\mathcal{Z}^0})$.

Note that we can similarly calculate the left-hand partial derivative
$$
\frac{\partial^- \mathcal{W}_2^2(\mu^{\mathcal{Z}^0}, \mu^{\mathcal{Z}^1})}{\partial {\hat z^0_i}}
= \frac{2}{N^0} \big(\hat z^0_i - T^<_{\mathcal{Z}^0 \to \mathcal{Z}^1} (\hat z^0_i)\big)
$$
where the mapping function $T^<_{\mathcal{Y}^0 \to \mathcal{Y}^1}(y^0_i)$ is
$$
T^<_{\mathcal{Z}^0 \to \mathcal{Z}^1}(z^0_i) = \lim_{\substack{\varepsilon \to 0 \\ \varepsilon < 0}} \; T_{\mathcal{Z}^0_{-\varepsilon} \to \mathcal{Z}^1} (\hat z^0_i + \varepsilon)
$$
where $\varepsilon$ is negative so that the underlying coupling matrix is for $\hat z_i^0$ coming from the left-hand side of its duplicates.

The left-hand hessian has the same expression as the right-hand hessian.

One notes that $
\frac{\partial^- \mathcal{W}_2^2(\mu^{\mathcal{Z}^0}, \mu^{\mathcal{Z}^1})}{\partial {\hat z^0_i}} \neq \frac{\partial^+ \mathcal{W}_2^2(\mu^{\mathcal{Z}^0}, \mu^{\mathcal{Z}^1})}{\partial {\hat z^0_i}}$, so that on duplicate values, the Wasserstein-2 is not $\mathcal{C}^1$. The difference between left and right-hand derivatives is however tending to zero when the number of samples $|\mathcal{Z}^0|$ goes to infinity, confirming our approach for numerical estimation based on cumulative distribution functions.

The derivative is equivalent for a sample $\hat z^1_j\in \mathcal{Z}^1$, with the transport map $T_{\mathcal{Z}^1 \to \mathcal{Z}^0}$.

%The calculation for the second order equivalently involve considering the values set $\mathcal{Y}_{2\varepsilon}^0$ with a perturbation $y^0_i + 2 \varepsilon$, and compute the corresponding Wasserstein distance 
%$\mathcal{W}_2^2(\mu^{\mathcal{Y}_{2\varepsilon}^0}, \mu^{\mathcal{Y}_{\varepsilon}^1})$. For $\varepsilon \to 0$ the corresponding allocation solution $\gamma^*$ is the same as above. Second-order forward formula:

%We can separately consider the first terms
%\begin{align}
%&\frac{\mathcal{W}_2^2(\mu^{\mathcal{Y}_\varepsilon^0}, \mu^{\mathcal{Y}^1}) -
%\mathcal{W}_2^2(\mu^{\mathcal{Y}^0}, \mu^{\mathcal{Y}^1})}{\varepsilon^2} \nonumber \\
%&= \frac{1}{\varepsilon N^0} (2 y^0_i - 2 \sum_{k=1}^{|\mathcal{U}^1|} N^0\gamma^*_{i,k} u^1_k) + \frac{1}{N^0}
%\end{align}

\end{proof}

%\clearpage
\section{Samplewise Derivatives of Continuous Wasserstein Distance}

We denote by $\mu_0$ and $\mu_1$ the output distributions of $\hat{z}$ for observations in the groups $S = 0$ and $S = 1$, respectively, and denote by $h_0$ and $h_1$ their densities. Their corresponding cumulative distribution functions are $H_0$ and $H_1$. The Wasserstein-2 distance between the two conditional distributions is defined as

\begin{equation}
    \label{eq:w2_dist}
    \mathcal{W}^2_2(\mu_0,\mu_1) = \int_{0}^1 \Big(H^{-1}_0(\tau) - H^{-1}_1(\tau)\Big)^2d\tau, 
\end{equation}

where $H^{-1}_s$ is the inverse of the cumulative distribution function $H_s$. In the following, we follow \cite{risser2022tackling} that give a meaning to the pseudo-derivative of $\mathcal{W}^2_2(\mu_0,\mu_1)$ with respect to a specific observation $\hat{z_i}$. In the following, we omit $\hat{\cdot}$ for readability.

\subsection{General Gâteaux Differentiability Model}
The transport cost \Cref{eq:w2_dist} is Gâteaux differentiable in the direction $(\alpha, \beta)$, with derivative $\mathcal{DW}^2_2(\mu_0, \mu_1)(\alpha, \beta)$ if the limit

\begin{align}
    \label{eq:dw2_def}
    &\mathcal{DW}^2_2(\mu_0, \mu_1)(\alpha, \beta) := \lim_{t \rightarrow 0} \frac{\mathcal{W}^2_2(\mu_0 + t\alpha, \mu_1 + t\beta) - \mathcal{W}^2_2(\mu_0, \mu_1)}{t}
\end{align}

exits and is finite.

By only considering the $\alpha$ term, i.e. we perturb only the distribution $\mu_0$, \Cref{eq:dw2_def} can the be written as

\begin{align}
     &\mathcal{DW}^2_2(\mu_0, \mu_1)(\alpha, 0) :=  \lim_{t \rightarrow 0} \int_{0}^{1} \frac{(H^{-1}_{\mu_0 + t\alpha}(\tau) - H^{-1}_{\mu_1}(\tau))^2 - (H^{-1}_{\mu_0}(\tau) - H^{-1}_{\mu_1}(\tau))^2}{t}d\tau
\end{align}

By factorizing this equation, we can show that it is equal to

\begin{align}
    \label{eq:dw2-factorized}
    &\mathcal{DW}^2_2(\mu_0, \mu_1)(\alpha, 0) := \lim_{t \rightarrow 0} \int_{0}^{1} \frac{(H^{-1}_{\mu_0 + t\alpha} + H^{-1}_{\mu_0} - 2H^{-1}_{\mu_1})(H^{-1}_{\mu_0 + t\alpha} - H^{-1}_{\mu_0})}{t}d\tau,
\end{align}

where we omitted $\tau$ for readability. Taking the limit, \Cref{eq:dw2-factorized} can be rewritten as 

\begin{align}
    \mathcal{DW}^2_2(\mu_0, \mu_1)(\alpha, 0) = 2 \int_{0}^{1}(H^{-1}_{\mu_0} - H^{-1}_{\mu_1})\frac{d}{dt}H^{-1}_{\mu_0 + t\alpha}\rvert_{t=0}d\tau.
\end{align}

We can show that 
\begin{equation}
    \label{eq:dh}
    \frac{d}{dt}H^{-1}_{\mu_0 + t\alpha} = \frac{-\alpha(-\infty, H^{-1}_{\mu_0 + t\alpha}(\tau))}{\mu_0(H^{-1}_{\mu_0 + t\alpha}(\tau)) + t\alpha(H^{-1}_{\mu_0 + t\alpha}(\tau))},
\end{equation}

where $\alpha(-\infty, x) = \int_{-\infty}^{x}d\alpha$.

Therefore, 

\begin{align}
    \label{eq:dw2_final}
    \mathcal{DW}^2_2(\mu_0, \mu_1)(\alpha, 0) =  2 \int_{0}^{1}(H^{-1}_{\mu_0} - H^{-1}_{\mu_1}) \frac{-\alpha(-\infty, H^{-1}_{\mu_0}(\tau))}{\mu_0(H^{-1}_{\mu_0}(\tau))}d\tau.
\end{align}

\subsection{Influence of the Perturbation of an Output $z_i$ on the Wasserstein-2 Distance}

In order to model the impact of a local perturbation around $z_i$ on $\mathcal{W}^2_2(\mu_0,\mu_1)$, with $s_i = 0$, we set $\alpha_{\varepsilon}^i = \Delta\mathds{1}_{[z_i, z_i + \varepsilon]} - \Delta\mathds{1}_{[z_i - \varepsilon, z_i[}$, with $\varepsilon > 0$ and $\Delta>0$.

We set $z = H_{\mu_0}^{-1}(\tau)$ in \Cref{eq:dw2_final}. This means $\tau = H_{\mu_0}(z)$, $d\tau = \mu_0(z)dz$ and

\begin{align}
    \label{eq:dw2-var-change}
    &\mathcal{DW}^2_2(\mu_0, \mu_1)(\alpha_{\varepsilon}^i, 0) = -2 \int_{z_i - \varepsilon}^{z_i + \varepsilon}(z - H^{-1}_{\mu_1}(H_{\mu_0}(z)) \alpha_{\varepsilon}^i(-\infty, z)dz.
\end{align}

For $z \in [z_i - \varepsilon, z_i + \varepsilon]$, we have

\begin{align}
    \alpha^i_{\varepsilon}(H_{\mu_0}^{-1}(\tau)) &= \alpha^i_{\varepsilon}(z) = \Delta\sgn(z - z_i),
    \\
    \alpha^i_{\varepsilon}(-\infty, H_{\mu_0}^{-1}(\tau)) &= \alpha^i_{\varepsilon}(-\infty, z) = \Delta(|z - z_i| - \varepsilon).
\end{align}

We can rewrite \Cref{eq:dw2-var-change} in the following way

\begin{align}
    \label{eq:dw2-two-terms}
    &\mathcal{DW}^2_2(\mu_0, \mu_1)(\alpha_{\varepsilon}^i, 0)  \nonumber =2\varepsilon\Delta\int_{z_i - \varepsilon}^{z_i + \varepsilon}(z - H^{-1}_{\mu_1}(H_{\mu_0}(z))dz - 2\Delta\int_{z_i - \varepsilon}^{z_i + \varepsilon}(z - H^{-1}_{\mu_1}(H_{\mu_0}(z))|z - z_i|dz.
\end{align}

% We can show that the first term is equal to $2\varepsilon(z_i - H^{-1}_{\mu_1}(H_{\mu_0}(z_i)) + $.

To calculate the integrals, we set $A(z) := z - H^{-1}_{\mu_1}(H_{\mu_0}(z))$ . The Taylor's expansion around $z_i$ for $A(z)$ is

\begin{equation}
    A(z) = A(z_i) + A'(z_i)(z - z_i) + \mathcal{O}(\varepsilon^2),
\end{equation}

where $|z - z_i| \leq \varepsilon$. (\textit{NB}: $A(z)$ should be in $\mathcal{C}^1$).

Splitting the second term in \Cref{eq:dw2-two-terms} into two integrals $\int_{z_i - \varepsilon}^{z_i}$ and $\int_{z_i}^{z_i + \varepsilon}$ and substituting Taylor's expansion, we obtain

\begin{align}
    &2\varepsilon\Delta A(z_i)\int_{z_i - \varepsilon}^{z_i + \varepsilon}dz + 2\varepsilon\Delta A'(z_i)\int_{z_i - \varepsilon}^{z_i + \varepsilon}(z - z_i)dz  \nonumber\\ 
    &+ 2\Delta A(z_i)\int_{z_i - \varepsilon}^{z_i}(z - z_i)dz +  2\Delta A'(z_i)\int_{z_i - \varepsilon}^{z_i}(z - z_i)^2dz  \nonumber
    \\ 
    &- 2\Delta A(z_i)\int_{z_i}^{z_i + \varepsilon}(z - z_i)dz - 2\Delta A'(z_i)\int_{z_i}^{z_i + \varepsilon}(z - z_i)^2dz + \mathcal{O}(\varepsilon^3) \nonumber
    \\ 
    &= 4\varepsilon^2\Delta A(z_i) - \varepsilon^2\Delta A(z_i) + \frac{2\varepsilon^3}{3}\Delta A'(z_i) - \varepsilon^2\Delta A(z_i) - \frac{2\varepsilon^3}{3}\Delta A'(z_i) + \Delta\mathcal{O}(\varepsilon^4).
\end{align}

Finally,

\begin{align}
    &\mathcal{DW}^2_2(\mu_0, \mu_1)(\alpha_{\varepsilon}^i, 0) = 2\varepsilon^2\Delta(z_i - H^{-1}_{\mu_1}(H_{\mu_0}(z_i)) +  \Delta\mathcal{O}(\varepsilon^4).
\end{align}

By setting in the perturbation $\alpha^i_{\varepsilon}$, $\Delta = \frac{1}{\varepsilon^2 n^0}$ and taking $\varepsilon$ to 0, we obtain the same expression of the gradient as in \Cref{proposition:main-derivatives}

\begin{align}
    &\mathcal{DW}^2_2(\mu_0, \mu_1)(\alpha_{\varepsilon}^i, 0)  \xrightarrow[\varepsilon \to 0]{} \frac{2}{n^0}(z_i - H^{-1}_{\mu_1}(H_{\mu_0}(z_i)).
\end{align}

\subsection{Extension to Second Derivatives}
The transport cost \Cref{eq:w2_dist} has the second Gâteaux derivative $\mathcal{DW}^2_2(\mu_0, \mu_1)(\alpha, \beta)$ in the direction $(\alpha, \beta)$, if the limit

\begin{align}
    \label{eq:d2w2_def}
    &\mathcal{D}^2\mathcal{W}^2_2(\mu_0, \mu_1)(\alpha, \beta) := \lim_{t \rightarrow 0} \frac{1}{t^2} \Bigg[\mathcal{W}^2_2(\mu_0 + t\alpha, \mu_1 + t\beta) - 2\mathcal{W}^2_2(\mu_0, \mu_1)  + \mathcal{W}^2_2(\mu_0 - t\alpha, \mu_1 - t\beta)\Bigg]
\end{align}
exits and is finite.

By only considering the $\alpha$ term, i.e. we perturb only the distribution $\mu_0$, \Cref{eq:d2w2_def} can the be written as

\begin{align}
     \mathcal{D}^2\mathcal{W}^2_2(\mu_0, \mu_1)(\alpha, 0) := \lim_{t \rightarrow 0} \frac{1}{t^2} \int_{0}^{1} \Bigg[&(H^{-1}_{\mu_0 + t\alpha}(\tau) - H^{-1}_{\mu_1}(\tau))^2  - 2(H^{-1}_{\mu_0}(\tau) - H^{-1}_{\mu_1}(\tau))^2 
     \nonumber
     \\ &
     +(H^{-1}_{\mu_0 - t\alpha}(\tau) - H^{-1}_{\mu_1}(\tau))^2\Bigg]d\tau
\end{align}

By factorizing this equation, we can show that it is equal to

\begin{align}
    \label{eq:d2w2-factorized}
    \mathcal{D}^2\mathcal{W}^2_2(\mu_0, \mu_1)(\alpha, 0) :=
    \lim_{t \rightarrow 0} \frac{1}{t^2} \int_{0}^{1}& \Bigg[ (H^{-1}_{\mu_0 + t\alpha} - H^{-1}_{\mu_0})^2 + (H^{-1}_{\mu_0 - t\alpha} - H^{-1}_{\mu_0})^2 + \nonumber
    \\ 
    &+ 2(H^{-1}_{\mu_0} - H^{-1}_{\mu_1})(H^{-1}_{\mu_0 + t\alpha} - 2H^{-1}_{\mu_0} + H^{-1}_{\mu_0 - t\alpha}) \Bigg]d\tau,
\end{align}

where we omitted $\tau$ for readability. Taking the limit, \Cref{eq:d2w2-factorized} can be rewritten as

\begin{align}
    \label{eq:d2w2_final}
    &\mathcal{D}^2\mathcal{W}^2_2(\mu_0, \mu_1)(\alpha, 0) =  2 \int_{0}^{1} \Bigg[\Bigg(\frac{d}{dt}H^{-1}_{\mu_0 + t\alpha}\rvert_{t=0}\Bigg)^2 + (H^{-1}_{\mu_0} - H^{-1}_{\mu_1})\frac{d^2}{dt^2}H^{-1}_{\mu_0 + t\alpha}\rvert_{t=0}\Bigg]d\tau.
\end{align}

We can show that
\begin{align}
    \label{eq:d2h}
    \frac{d^2}{dt^2}H^{-1}_{\mu_0 + t\alpha} =& \frac{1}{(\mu_0(H^{-1}_{\mu_0 + t\alpha}) + t\alpha(H^{-1}_{\mu_0 + t\alpha}))^2} \cdot \nonumber \\ 
    &\Bigg[-\alpha(H^{-1}_{\mu_0 + t\alpha})\frac{d}{dt}H^{-1}_{\mu_0 + t\alpha}\Big(\mu_0(H^{-1}_{\mu_0 + t\alpha}) + t\alpha(H^{-1}_{\mu_0 + t\alpha})\Big) + \nonumber 
    \\
    & + \alpha(-\infty, H^{-1}_{\mu_0 + t\alpha})\Big(\mu^{\prime}_0(H^{-1}_{\mu_0 + t\alpha})\frac{d}{dt}H^{-1}_{\mu_0 + t\alpha} + \alpha(H^{-1}_{\mu_0 + t\alpha}) + \nonumber \\
    & + t\alpha^\prime(H^{-1}_{\mu_0 + t\alpha})\frac{d}{dt}H^{-1}_{\mu_0 + t\alpha}\Big)\Bigg].
\end{align}

At $t=0$, \Cref{eq:dh} and \Cref{eq:d2h} reduce to
\begin{align}
    \frac{d}{dt}H^{-1}_{\mu_0+t\alpha}\Big|_{t=0}
  = \frac{-\alpha(-\infty,H^{-1}_{\mu_0})}{\mu_0(H^{-1}_{\mu_0})}, 
\end{align}

\begin{align}
    &\frac{d^2}{dt^2}H^{-1}_{\mu_0+t\alpha}\Big|_{t=0}
    = \frac{2\,\alpha(H^{-1}_{\mu_0})\,\alpha(-\infty,H^{-1}_{\mu_0})}{\mu_0(H^{-1}_{\mu_0})^2}
   -  \frac{\mu_0'(H^{-1}_{\mu_0})\,\alpha(-\infty,H^{-1}_{\mu_0})^2}{\mu_0(H^{-1}_{\mu_0})^3},
\end{align}

We set $z = H_{\mu_0}^{-1}(\tau)$ in \Cref{eq:d2w2_final}. This means $\tau = H_{\mu_0}(z)$, $d\tau = \mu_0(z)dz$ and

\begin{align}
    \label{eq:d2w2-var-change}
    &\mathcal{D}^2\mathcal{W}^2_2(\mu_0, \mu_1)(\alpha, 0) = 2\int_0^1\Bigg[
   \frac{\alpha(-\infty,z)^2}{\mu_0(z)}
    + \frac{2\,A(z)\,\alpha(z)\,\alpha(-\infty,z)}{\mu_0(z)} - \frac{A(z)\,\mu_0'(z)\,\alpha(-\infty,z)^2}{\mu_0(z)^2}
   \Bigg]dz.
\end{align}

We set $\alpha_{\varepsilon}^i = \Delta\mathds{1}_{[z_i, z_i + \varepsilon]} - \Delta\mathds{1}_{[z_i - \varepsilon, z_i[}$, with $\varepsilon > 0$ and $\Delta>0$, as in the previous calculations. Then,

\begin{align}
\label{eq:d2w2-three-terms}
\mathcal{D}^2\mathcal{W}_2^2(\mu_0,\mu_1)(\alpha^i_\varepsilon,0) =  2\Delta^2\!\int_{z_i-\varepsilon}^{z_i+\varepsilon}\Bigg[ &
   \frac{(|z - z_i|-\varepsilon)^2}{\mu_0(z)}  
   + \frac{2\,A(z)\,\mathrm{sgn}(z - z_i)\,(|z - z_i|-\varepsilon)}{\mu_0(z)} -\nonumber \\
&- \frac{A(z)\,\mu_0'(z)\,(|z - z_i|-\varepsilon)^2}{\mu_0(z)^2}
   \Bigg]dz.
\end{align}

To calculate the integrals, we set $B(z) := \frac{1}{\mu_0(z)}$, $C(z) := \frac{A(z)}{\mu_0(z)}$ and $D(z) := \frac{A(z)\mu^\prime_0(z)}{\mu_0^2(z)}$.

We note that

\begin{align}
    &\int_{z_i-\varepsilon}^{z_i+\varepsilon}(|z - z_i|-\varepsilon)^2\,dz = \frac{2\varepsilon^3}{3}, \\
    &\int_{z_i-\varepsilon}^{z_i+\varepsilon}(z - z_i)\,(|z - z_i|-\varepsilon)^2\,dz = 0, \\
    &\int_{z_i-\varepsilon}^{z_i+\varepsilon}\sgn(z - z_i)\,(|z - z_i|-\varepsilon)\,dz = 0, \\
    &\int_{z_i-\varepsilon}^{z_i+\varepsilon}|z - z_i|\,(|z - z_i|-\varepsilon)\,dz = -\frac{\varepsilon^3}{3},
\end{align}

Taking it into account and substituting Taylor's expansions of $B(z), C(z)$ and $D(z)$ in \Cref{eq:d2w2-three-terms}, we obtain

\begin{align}
    \mathcal{D}^2\mathcal{W}_2^2(\mu_0,\mu_1)(\alpha^i_\varepsilon,0) &= \frac{4\varepsilon^3\Delta^2}{3} \Big[B(z_i) - C^\prime(z_i) - D(z_i)\Big]  + \Delta^2\mathcal{O}(\varepsilon^5)  \nonumber \\
    & =\frac{4\varepsilon^3\Delta^2}{3}\frac{1-A'(z_i)}{\mu_0(z_i)} + \Delta^2\mathcal{O}(\varepsilon^5).
\end{align}

We can show that

\begin{align}
    \frac{1 - A'(z_i)}{\mu_0(z_i)} = \frac{1}{\mu_1 \big(H^{-1}_{\mu_1}(H_{\mu_0}(z_i))\big)}.
\end{align}

Hence,

\begin{align}
    &\mathcal{D}^2\mathcal{W}_2^2(\mu_0,\mu_1)(\alpha^i_\varepsilon,0) = \frac{4\Delta^2\varepsilon^3}
     {3\,\mu_1 \big(H^{-1}_{\mu_1}(H_{\mu_0}(z_i))\big)}
     + \Delta^2O(\varepsilon^5).
\end{align}

By setting in the perturbation $\alpha^i_{\varepsilon}$, $\Delta = \frac{1}{\varepsilon}$, we obtain

\begin{align}
    \mathcal{D}^2\mathcal{W}_2^2(\mu_0,\mu_1)(\alpha^i_\varepsilon,0) &= \varepsilon \frac{4}
     {3\,\mu_1 \big(H^{-1}_{\mu_1}(H_{\mu_0}(z_i))\big)}
     + o(\varepsilon^2) \xrightarrow[\varepsilon \to 0]{} 0.
\end{align}

%\section{Algorithm for Discrete Wasserstein computation}

%Blabla. Pseudo Code. Ranking between samples is key.

%\clearpage
\section{Non-Diagonal Hessian Terms}

For non-diagonal hessian terms, we consider the following partial derivative:

\begin{align*}
    & \frac{\partial^- \partial^+ \mathcal{W}_2^2(\mu^{\mathcal{Z}^0}, \mu^{\mathcal{Z}^1})}{\partial^- {\hat z^0_i} \partial^+ {\hat z^0_j}}  =
     \lim_{\substack{\varepsilon \to 0 \\ \varepsilon<0}}
\frac{\frac{\partial^+ \mathcal{W}_2^2(\mu^{\mathcal{Z}_\varepsilon^0}, \mu^{\mathcal{Z}^1})}{\partial \hat z_j^0} -
\frac{\partial^+ \mathcal{W}_2^2(\mu^{\mathcal{Z}^0}, \mu^{\mathcal{Z}^1})}{\partial \hat z_j^0}}{\varepsilon}
\end{align*}
where $\mathcal{Z}_\varepsilon^0$ is the set of values with $z_j^0$ is perturbed by $+\varepsilon$. Since $\varepsilon<0$, the order in $\mathcal{Z}_\varepsilon^0$ is the same as the one in the definition of the transport map $T_{\mathcal{Z}^0 \to \mathcal{Z}^1}(\hat z^0_j)$ (\Cref{eq:mapping}) involved in $\frac{\partial^+ \mathcal{W}_2^2(\mu^{\mathcal{Z}_\varepsilon^0}, \mu^{\mathcal{Z}^1})}{\partial \hat z_j^0}$ (obvious if $\hat z_j^0> \hat z_i^0$, and also holds if $\hat z_j^0 < \hat z_i^0$ for sufficiently small $\varepsilon$). Thus, the coupling matrix $\gamma$ is preserved and $T_{\mathcal{Z}^0 \to \mathcal{Z}^1}(\hat z^0_j)=T_{\mathcal{Z}_\varepsilon^0 \to \mathcal{Z}^1}(\hat z^0_j)$, leading to
$$
\frac{\partial^- \partial^+ \mathcal{W}_2^2(\mu^{\mathcal{Z}^0}, \mu^{\mathcal{Z}^1})}{\partial^- {\hat z^0_i} \partial^+ {\hat z^0_j}}=0
$$

Note that the same calculation holds for $
\frac{\partial^+ \partial^- \mathcal{W}_2^2(\mu^{\mathcal{Z}^0}, \mu^{\mathcal{Z}^1})}{\partial^+ {\hat z^0_i} \partial^- {\hat z^0_j}}=0$.

In the discrete formulation, $
\frac{{\partial^+}^2 \mathcal{W}_2^2(\mu^{\mathcal{Z}^0}, \mu^{\mathcal{Z}^1})}{\partial {\hat z^0_i} \partial {\hat z^0_j}}$ and $
\frac{{\partial^-}^2 \mathcal{W}_2^2(\mu^{\mathcal{Z}^0}, \mu^{\mathcal{Z}^1})}{\partial {\hat z^0_i} \partial {\hat z^0_j}}$
are not defined, since the limit does not exist. While theoretically a problem, in practice the approximation we make by considering $=0$ has limited impact. First, it is only for duplicate values, that are a minority in general practical cases. Second, as shown above with our continuous formalism derivations, this problem of duplicates in the discrete formulation disapears when studying the Wasserstein on continuous distributions. Moreover, this approximation means to neglect the ill-defined cases of non-diagonal hessian terms, which falls into the sample-wise derivatives that are the gradient boosting philosophy. Last, we have numerical illustrations confirming that the above approximation has limited impact in practice, that we describe in the following.

Figure \ref{fig:scattter-oracle-hessian-matrix} shows empirical oracle hessian matrix (on the first group $\mathcal{Z^0}$ samples), with a protocol similar to the end of Section Model, with the same one-sided derivatives as above.

The following section illustrate that when incorporated in the gradient boosting objective, our derivatives are indeed minimizing the Wasserstein-2 distance $\mathcal{W}_2^2(\mu^{\mathcal{Z}^0},\mu^{\mathcal{Z}^1})$.

\begin{figure}[h]
    \centering
    \includegraphics[width=0.5\linewidth]{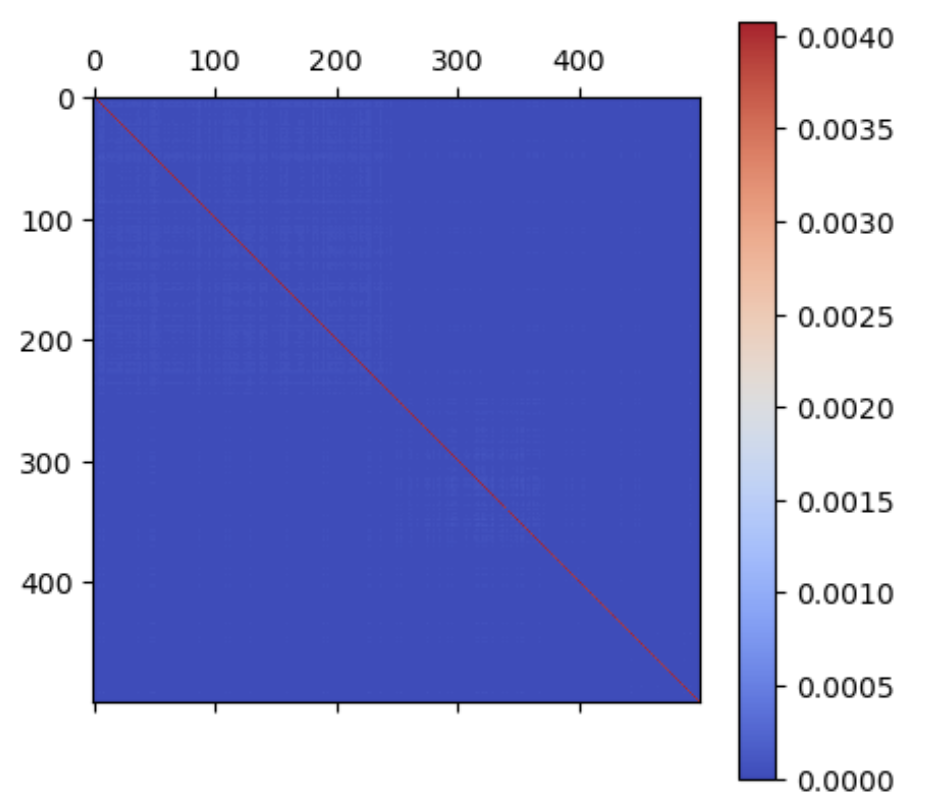}
    \caption{Hessian matrix from oracle estimates, on the first group $\mathcal{Z}^0$ samples ($n^0=500$).}
    \label{fig:scattter-oracle-hessian-matrix}
\end{figure}

\section{Empirical Convergence}

In this section, we consider fixed hyperparameters for OT-FairBoost optimizing DP for binary classification, except for $\lambda$ that varies. We illustrate on the train data how the empirical $\mathcal{W}_2^2(\mu^{\mathcal{Z}^0},\mu^{\mathcal{Z}^1})$ is evolving with respect to the boosting iterations $t$, with $\mathcal{Z}^0$ and $\mathcal{Z}^1$ updated at each iteration. Figure \ref{fig:cdfs-by-iterations-TN} shows their CDF at various iterations, showing two behaviors: the predictions $\hat z_i$ are more and more spread during the optimization, starting by a dirac on the average of ground truths $y_i$ (a well-known behavior for gradient descent algorithms); and the distributions $\mathcal{Z}^0$ and $\mathcal{Z}^1$ are more and more aligned when $\lambda$ increases, as our method aims to.
\begin{figure*}[h]
    \centering
    \includegraphics[width=1.0\linewidth]{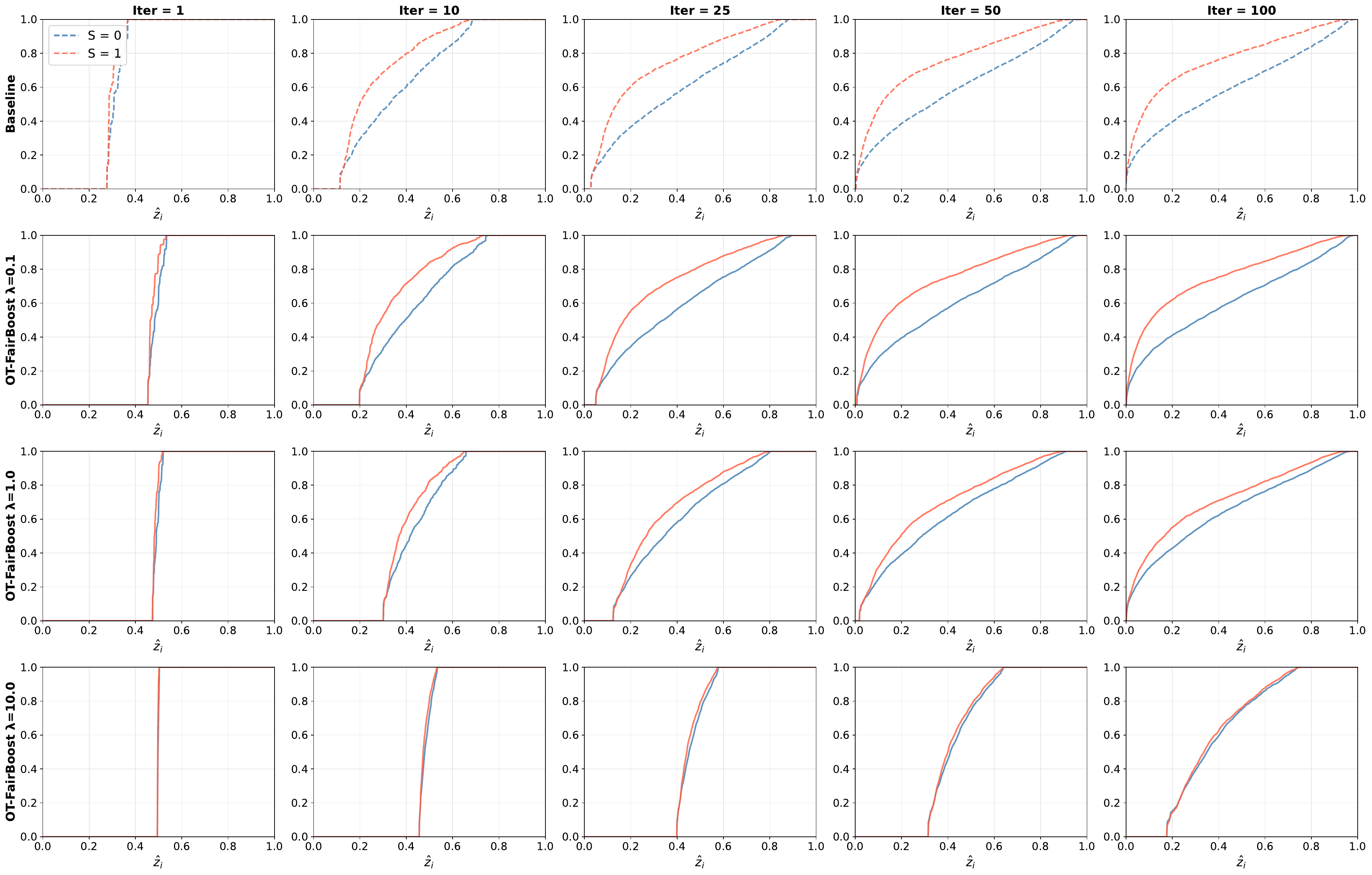}
    \caption{CDFs of $\mathcal{Z}^0$ and $\mathcal{Z}^1$ with respect to boosting iteration $t$: each line is for a $\lambda$ value ($\lambda = 0$, $0.1$, $1$ and $10$), and each row for an iteration value ($t=1$, $10$, $25$, $50$ and $100$).}
    \label{fig:cdfs-by-iterations-TN}
\end{figure*}

This optimization is a trade-off between the predictive performance and the fairness, where the first one is spreading more and more the predictions during the iterative gradient descent. This spreading makes the Wasserstein distance minimization harder to meet, as the differences $|\hat z_i^0 - \hat z_j^1|$ can be larger. To discard this effect, we thus consider a normalized Wasserstein distance:
\begin{align}
\bar{\mathcal{W}}_2(\mathcal{Z}^0, \mathcal{Z}^1) &= \frac{\mathcal{W}^2_2(\mathcal{Z}^0, \mathcal{Z}^1)}{\sigma_{\mathcal{Z}^0}^2 + \sigma_{\mathcal{Z}^1}^2}
\nonumber
=\frac{\mathcal{W}^2_2(\mathcal{Z}^0, \mathcal{Z}^1)}{\mathcal{W}^2_2(\mathcal{Z}^0, \delta_{<\mathcal{Z}^0>}) + \mathcal{W}^2_2(\mathcal{Z}^1, \delta_{<\mathcal{Z}^1>})}
\nonumber
\end{align}
where $\sigma_{\mathcal{Z}^0}$ and $<\mathcal{Z}^0>$ respectively denotes the standard deviation and average of points $\mathcal{Z}^0$. We indeed can interpret the  variance $\sigma_{\mathcal{Z}^0}^2$ as the square Wasserstein-2 distance to the dirac distribution on the mean value $<\mathcal{Z}^0>$, thus proving the homogeneity of this normalization. The normalized Wasserstein-2 distance is derived in \Cref{fig:w2-normalized-by_iterations-TN} over boosting iterations for usual LightGBM and OT-FairBoost. We observe that the normalized Wasserstein-2 decreases as the boosting iterations increase, with stronger minimization for higher $\lambda$ values.

\begin{figure*}[h]
    \centering
    \includegraphics[width=0.8\linewidth]{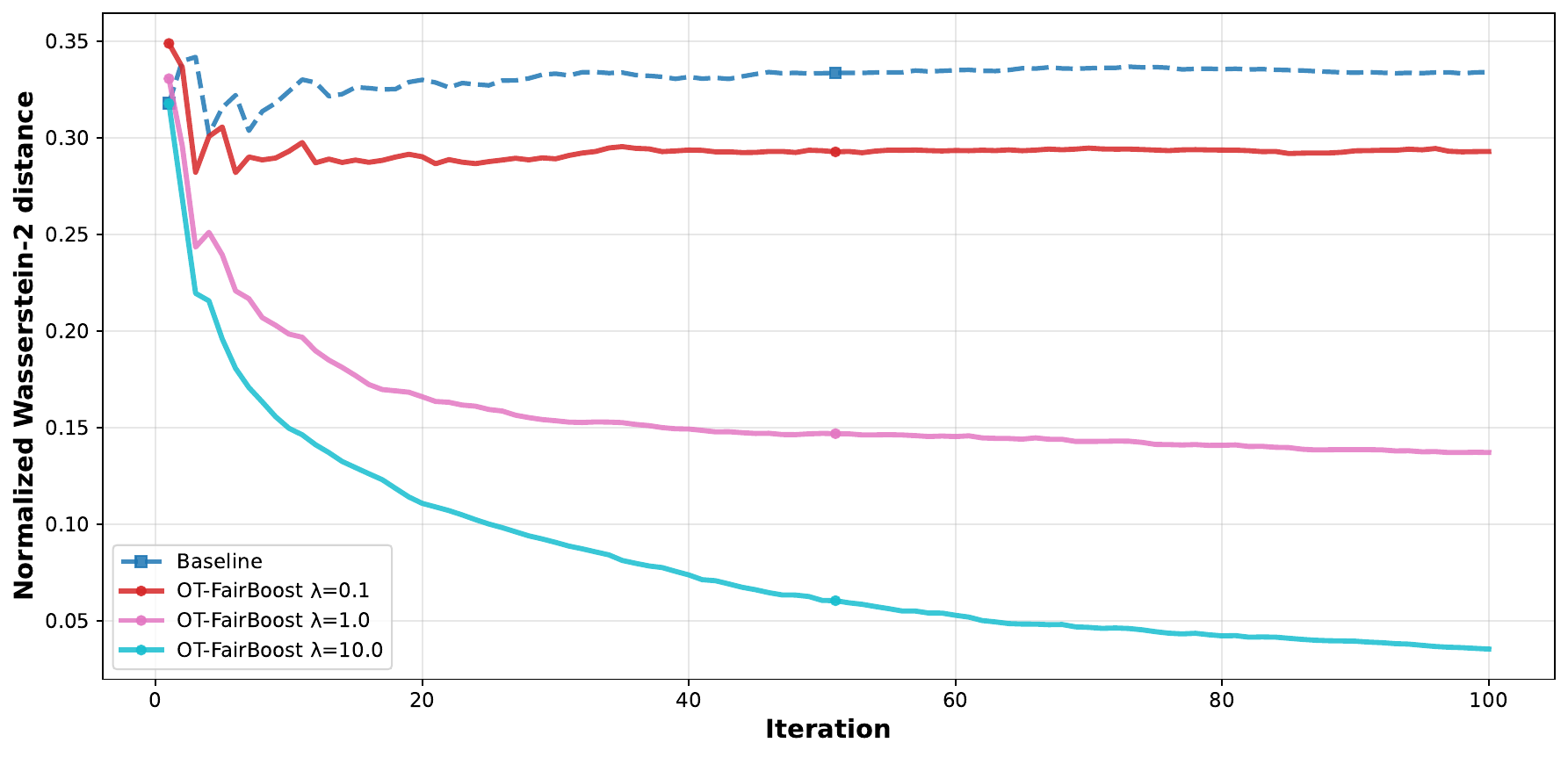}
    \caption{$\mathcal{W}_2$ normalized with respect to iterations.}
    \label{fig:w2-normalized-by_iterations-TN}
\end{figure*}

\end{document}